\newcommand{\Rien}[1]{}
\newcommand{\Formel}[1]{(\ref{#1})}
\newcommand{\BN}{\mathbb B}
\newcommand{\Prop}[1]{Proposition~\ref{#1}}
\newcommand{\Cor}[1]{Corollary~\ref{#1}}
\newcommand{\Theo}[1]{Theorem~\ref{#1}}
\renewcommand{\O}{{\mathcal O}}
\newcommand{\PN}{{\mathbb P}}
\newcommand{\Pic}{{\rm Pic}}
\newcommand{\lra}{\longrightarrow}
\newcommand{\KC}{{\mathbb C}}
\newcommand{\KZ}{{\mathbb Z}}
\newcommand{\KQ}{{\mathbb Q}}
\newcommand{\KN}{{\mathbb N}}
\newcommand{\End}{{\rm End}}
\newcommand{\Sieg}{\mathfrak H}
\newcommand{\OH}{\mathbb H}
\newcommand{\KR}{\mathbb R}
\newcommand{\sO}{{\mathcal O}}
\newtheorem{lemma1}[equation]{}
\newenvironment{lemma}{\begin{lemma1}{\bf Lemma.}}{\end{lemma1}}
\newenvironment{example}{\begin{lemma1}{\bf Example.}\rm}{\end{lemma1}}
\newenvironment{theorem}{\begin{lemma1}{\bf Theorem.}}{\end{lemma1}}
\newenvironment{proposition}{\begin{lemma1}{\bf Proposition.}}{\end{lemma1}}
\newenvironment{corollary}{\begin{lemma1}{\bf Corollary.}}{\end{lemma1}}
\newenvironment{rem}{\begin{lemma1}{\bf Remark.}\rm}{\end{lemma1}}
\newenvironment{definition}{\begin{lemma1}{\bf Definition.}}{\end{lemma1}}
\begin{document}

\title{Holomorphic normal projective connections on projective manifolds}
\author[P. Jahnke]{Priska Jahnke}
\address{Priska Jahnke - Institut f\"ur Mathematik- Freie Universit\"at Berlin
  - Arnimallee 3 - D-14195 Berlin, Germany}
\email{priska.jahnke@fu-berlin.de}
\author[I.Radloff]{Ivo Radloff}
\address{Ivo Radloff - Mathematisches Institut - Universit\"at Bayreuth -
  D-95440 Bayreuth, Germany}
\email{ivo.radloff@uni-bayreuth.de}
\date{\today}
\maketitle

\section*{Introduction}
 The notion of projective connections traces back to E.Cartan. The existence of a {\em holomorphic} projective connection on a compact complex manifold $M$ is a strong condition as soon as $\dim M > 1$. A result of Kobayashi and Ochiai (\cite{KO}, \Theo{KE}) says that a compact K\"ahler Einstein (KE) manifold $M_m$ admits a holomorphic projective connection if and only if $M$ is
\begin{enumerate}
  \item $\PN_m(\KC)$
  \item a finite {\'e}tale quotient of a torus or
  \item a ball quotient, i.e., the universal covering space of $M$ can be identified with the open unit ball $\BN_m(\KC)$ in $\KC^m$.
\end{enumerate}
Equivalently, the holomorphic sectional curvature is constant on $M$ or, in purely numerical terms, the Chen--Ogiue inequality is an equality:
           \[2(m+1)c_2(M) = mc_1(M)^2.\]
Kobayashi and Ochiai also gave a classification of all compact complex surfaces carrying such a connection (\cite{KO}). For K\"ahler surfaces their result says that any such surface is KE.

Before the authors' previous work in \cite{JRproj} it was not known whether there exist non--KE examples of compact K\"ahler manifolds carrying such a connection. In \cite{JRproj} the complete classification of projective threefolds with a holomorphic projective connection was given with one additional example: modular families of fake elliptic curves over a Shimura curve.

In this article the classification is extended to projective manifolds of arbitrary dimension -- up to the question of abundance (see explanations below):

\begin{theorem} \label{main} Let $M$ be an $m$--dimensional projective manifold with a holomorphic projective connection $\Pi$. Assume that $M$ is {\em not} K\"ahler Einstein, i.e., not one of the examples above. Then $K_M$ is nef. If $K_M$ is even abundant and if $\Pi$ is flat, then, up to a finite {\'e}tale covering, $M$ is an abelian group over a compact Shimura curve $C$, i.e., there exists a holomorphic submersion 
    \[f: M \lra C\]
such that every fiber is smooth abelian and such that $f$ has a smooth section. Moreover, the following equivalent conditions are satisfied:
\begin{enumerate}
 \item the Arakelov inequality (\cite{Fa})
  \[2\deg f_*\Omega^1_{M/C} \le (m-1)\deg K_C = 2g_C-2\]
 is an equality. 
 \item $M \simeq Z \times_C Z \times_C \cdots \times_C Z$ where $Z \lra C$ a Kuga fiber space constructed from the rational corestriction $Cor_{F/\KQ}(A)$ of a division quaternion algebra $A$ defined over a totally real number field $F$ such that
   \[A \otimes_{\KQ} \KR \simeq M_2(\KR) \oplus {\mathbb H} \oplus \cdots \oplus {\mathbb H}.\]
Here ${\mathbb H}$ denotes the Hamiltonian quaternions (see section~\ref{Ex}).
\end{enumerate}
Conversely, any abelian scheme as in 1.) or 2.) admits a flat projective connection.
\end{theorem}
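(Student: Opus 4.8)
The plan is to handle the four assertions separately --- nefness of $K_M$, the fibration structure, the equivalence of 1.) and 2.), and the converse --- each resting on the developing map of $\Pi$ and on the rigidity of projective connections in the presence of rational curves (\cite{KO, JRproj}). To see that $K_M$ is nef I would argue by contradiction via Mori theory: if it were not, the cone theorem would yield an extremal rational curve $C_0$ with $K_M\cdot C_0<0$; restricting $\Pi$ to a minimal such curve and analysing the induced projective structure along it forces, by the rational-curve rigidity underlying \Theo{KE}, all of $M$ to be covered by lines and hence $M\simeq\PN_m(\KC)$, against the non-KE hypothesis. Thus $M$ carries no rational curves, is not uniruled, and $K_M$ is nef.

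Assuming in addition that $K_M$ is abundant and $\Pi$ flat, I would produce the submersion $f\colon M\to C$. Flatness gives a developing map $\mathrm{dev}\colon\tilde M\to\PN_m(\KC)$ and a holonomy $\rho\colon\pi_1(M)\to\mathrm{PGL}_{m+1}(\KC)$, while abundance makes $K_M$ semiample, so its Iitaka fibration is a genuine morphism. The strategy is to classify the possible developing images for a non-uniruled, non-KE flat structure: ruling out the homogeneous models $\PN_m$, $\KC^m$ and $\BN_m$ of \Theo{KE}, the image must be the product domain $\Sieg\times\KC^{m-1}$, an open subset of an affine chart of $\PN_m(\KC)$, where $\Sieg$ is the upper half plane. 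The $\rho$-invariance of this splitting descends to a submersion onto a compact curve $C=\Sieg/\Gamma$ with smooth abelian fibers $\KC^{m-1}/\Lambda$ and a smooth zero section, and forces $\kappa(M)=1$. I expect the main obstacle to sit exactly here: showing that the curved factor of the developing image is one-dimensional --- so the base is a \emph{curve} rather than a higher-rank Shimura variety --- and that the flat affine structure on the remaining factor makes the fibers abelian. This is where the rank restriction imposed by embedding a symmetric domain into a single $\PN_m$-chart does the essential work, and where I would generalize the threefold computation of \cite{JRproj} by induction on the relative dimension $m-1$.

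For the equivalence of 1.) and 2.), I would pass to the weight-one variation of Hodge structure $R^1f_*\KC$ and its Higgs bundle. The Arakelov equality of part 1.) holds precisely when the Higgs field is maximal, i.e.\ the family has maximal variation and its period map identifies $C$ with a Shimura curve; by the classification of such maximal families (Kuga, Mumford, and the Arakelov-equality analysis of Viehweg--Zuo) these are exactly the Kuga fiber spaces built from the corestriction $\mathrm{Cor}_{F/\KQ}(A)$ of a quaternion division algebra over a totally real field $F$ with the prescribed real splitting type $M_2(\KR)\oplus\mathbb H\oplus\cdots\oplus\mathbb H$, and $M$ is then the iterated fiber product $Z\times_C\cdots\times_C Z$. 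Translating the differential-geometric maximality of the Higgs field into the arithmetic datum $A$ is the technical core of this step, with \cite{Fa} supplying the inequality itself.

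Finally, for the converse I would exhibit the flat connection on the homogeneous model. Writing the Kuga fiber space as $Z=\Gamma\backslash(\Sieg\times\KC^{m-1})$ with $\Gamma$ acting through the arithmetic group attached to $A$, the inclusion $\Sieg\times\KC^{m-1}\hookrightarrow\PN_m(\KC)$ realizes the universal cover inside projective space; since the Kuga--Mumford symplectic representation lands in $\mathrm{PGL}_{m+1}(\KC)$ and preserves the product domain, the standard flat projective structure of $\PN_m(\KC)$ pulls back and descends, and verifying the $\mathrm{PGL}_{m+1}$-cocycle condition yields the flat projective connection compatible with $\Pi$. I would check separately that case 1.), the Arakelov-equality abelian scheme, is the same construction read off from the period map, recovering for $m=3$ the fake-elliptic example of \cite{JRproj}.
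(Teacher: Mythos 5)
Your outer steps match the paper: nefness is proved there exactly as you suggest (Proposition~\ref{ratcurve} pulls back $T_{\PN_m(\KC)}$ along a development over a rational curve and invokes Mori's theorem), the converse is carried out in section~\ref{Ex} by the projective action on $\KC^{g}\times\Sieg_1\subset\PN_{g+1}(\KC)$ as you describe, and the equivalence of 1.) and 2.) is simply cited from \cite{VZ}, Theorem 0.5, so sketching the Higgs-bundle argument instead is harmless.

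The genuine gap is the middle step, where you propose to obtain the fibration by ``classifying the possible developing images.'' This is not a viable mechanism, for two reasons. First, a developing map of a flat projective structure is only locally biholomorphic; it is in general neither injective nor proper, and its image is not a priori on any short list of domains, so the claim that ruling out $\PN_m$, $\KC^m$, $\BN_m$ leaves only $\Sieg_1\times\KC^{m-1}$ assumes precisely what has to be proved. The paper never argues this way: the fibration is produced algebro-geometrically. Abundance gives the Iitaka fibration; absence of rational curves makes $M$ strongly minimal and the fibration equidimensional (Kawamata); the general fiber has torsion canonical class and, by Beauville's decomposition combined with Proposition~\ref{lfg} (generically large fundamental group, which is where the development is actually used), is covered by an abelian variety; Koll\'ar's theorem (\cite{Ko}, 6.3) then yields, after a finite \'etale cover, a fibration birational to an abelian group scheme, which strong minimality plus the Atiyah-class lemma on tori upgrade to a smooth abelian group scheme with a section (Proposition~\ref{AbSc}). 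Second, and more decisively, the reason you offer for the base being a \emph{curve} --- ``the rank restriction imposed by embedding a symmetric domain into a single $\PN_m$-chart'' --- does not exist: the ball $\BN_n$ of any dimension embeds into an affine chart, and indeed the paper first shows the base $N$ \emph{is} a ball quotient (Propositions~\ref{fam} and~\ref{ballq}, by descending the connection to $N$ along the section). What forces $\dim N=1$ is numerical: the h.n.p.c. forces the proportionality $c_1(E)=\tfrac{m-n}{n+1}c_1(K_N)$ (Corollary~\ref{ChernEq}), and feeding this into the Viehweg--Zuo slope inequality $2\mu_{K_N}(E)\le\mu_{K_N}(\Omega_N^1)$ gives $\tfrac{2}{n+1}\le\tfrac{1}{n}$, hence $n=1$, with equality --- which is simultaneously the Arakelov equality of 1.). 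Without this step, or a genuine substitute for it, your proposed induction on the relative dimension has nothing to run on.
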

The equivalence of 1.) and 2.) for abelian group schemes over curves is a result of Viehweg and Zuo (\cite{VZ}). The construction of the family $Z \lra C$ in 2.) is a generalization of a construction of modular families of abelian varieties due to Mumford (\cite{Mum}). For the proof of \Theo{main} see section~\ref{klassalles}. Note that \Theo{main} gives a result analogous to the KE case: an explicit method of how to construct the examples as well as a description in purely numerical terms. Some explanations:

The canonical divisor $K_M = \det\Omega_M^1$ is called nef if $K_M.C \ge 0$ for any irreducible curve $C$ in $M$. It is called abundant if $|dK_M|$ is spanned for some $d \gg 0$ defining a holomorphic map $f: M \lra Y$ onto some (perhaps singular) projective variety $Y$. The map is called {\em Iitaka fibration} of $M$.

{\em Abundance Conjecture (\cite{KM}).} Let $M$ be a projective manifold. Does $K_M$ nef imply $K_M$ abundant? 

The conjecture is known to hold true in dimensions $\le 3$ and in many particular cases. In different terms, \Theo{main} gives a complete classification of projective manifolds carrying a flat holomorphic projective structure in any dimension in which Abundance Conjecture is true.

The construction of the examples is explained in detail in section~\ref{Ex}, we only give some details here. Let $F$ be a totally real number field, $F:\KQ = d$. Let $A/F$ be as in 2.) of \Theo{main}. The rational corestriction $Cor_{F/\KQ}(A)$ is a central simple $\KQ$--algebra of dimension $4^d$ and either
  \[Cor_{F/\KQ}(A) \simeq M_{2^d}(\KQ) \quad \mbox{or} \quad Cor_{F/\KQ}(A) \simeq M_{2^{d-1}}(B),\]
 for some division quaternion algebra $B/\KQ$. We refer to the first case as '$B$ splits'. From this setting one constructs (\cite{Mum}, \cite{VZ}) a modular family $f: Z \lra C$ of abelian varieties over some compact Shimura curve $C$ with simple general fiber $Z_{\tau}$ satisfying
\begin{itemize}
  \item $\dim Z_\tau = 2^{d-1}$ and $End_{\KQ}(Z_{\tau}) \simeq \KQ$ in the case $B$ split,
  \item $\dim Z_\tau = 2^d$ and $End_{\KQ}(Z_{\tau}) \simeq B$ in the case $B$ non--split.
\end{itemize}

The smallest possible dimension is obtained for $F = \KQ$, $A = B$ an indefinite division quaternion algebra. Here $f: Z \lra C$ is a (PEL--type) family of abelian surfaces, $End_{\KQ}(Z_{\tau}) \simeq B$ for $\tau$ general. Such abelian surfaces are called fake elliptic curves above, it is the example found in \cite{JRproj}. Since the abundance conjecture is true in dimension $\le 3$ we have the following corollary to \Theo{main}:
\begin{corollary}
  Let $M$ be a projective manifold with a flat holomorphic normal projective connection of dimension at most $3$ which is not K\"ahler Einstein. Then $\dim M = 3$ and $M$ is, up to {\'e}tale coverings, a modular family of fake elliptic curves.
\end{corollary}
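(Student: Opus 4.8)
The plan is to deduce the statement directly from \Theo{main} together with abundance in low dimension, and then to read off the dimension from the explicit shape of the examples. First I would invoke \Theo{main}: since $M$ is not K\"ahler Einstein, $K_M$ is nef. As $\dim M \le 3$, the Abundance Conjecture is a theorem, so $K_M$ is abundant. The connection is flat by hypothesis, so the second part of \Theo{main} applies and, after passing to a finite \'etale cover, $f: M \lra C$ is an abelian group scheme over a compact Shimura curve $C$ for which the equivalent conditions 1.) and 2.) hold.

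Next I would carry out the dimension count. Since $C$ is a curve, the fibres are abelian varieties of dimension $\dim M - 1 \le 2$, and by 2.) we have $M \simeq Z \times_C \cdots \times_C Z$ ($n$ factors) with $g := \dim Z_\tau$ equal to $2^{d-1}$ in the split case and to $2^d$ in the non--split case, $d = [F:\KQ]$. The key observation is that always $g \ge 2$: if $d = 1$ then $F = \KQ$ and $A$ is a division algebra, so $Cor_{\KQ/\KQ}(A) = A$ cannot split and one is forced into the non--split case with $g = 2^1 = 2$; and if $d \ge 2$ then $g \ge 2^{d-1} \ge 2$. Hence $\dim M = 1 + ng \ge 3$, which together with $\dim M \le 3$ forces $\dim M = 3$, $n = 1$ and $g = 2$. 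In particular the fibres are abelian surfaces and $M = Z$.

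It remains to identify $Z \lra C$. Exactly two branches of 2.) yield $g = 2$: the non--split case with $d = 1$, where $\End_{\KQ}(Z_\tau) \simeq B$ is a division quaternion algebra over $\KQ$ and $Z \lra C$ is precisely a modular family of \emph{fake elliptic curves}; and the split case with $d = 2$, where $\End_{\KQ}(Z_\tau) \simeq \KQ$. To conclude I would rule out the second branch. Here the abelian--scheme description and the Arakelov equality 1.) coming from \Theo{main} do not by themselves separate the two families, since both are Shimura families attaining the Arakelov bound; the decisive extra input is the \emph{normality} of the projective connection. Concretely, I would compare the Higgs field (second fundamental form) of the weight--one variation of Hodge structure on $R^1f_*\KC$ with the curvature normalization imposed by the normal projective connection: for a flat normal structure the induced variation must be of the uniformizing quaternionic (PEL) type, which forces $\End_{\KQ}(Z_\tau)$ to contain a quaternion algebra and excludes the split $d = 2$ family. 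Equivalently, one may invoke the complete classification of projective threefolds carrying a holomorphic projective connection established in \cite{JRproj}, whose only non--KE member is the fake elliptic curve family.

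The main obstacle is precisely this last step: everything up to $\dim M = 3$ with abelian surface fibres is a formal consequence of \Theo{main} and abundance, but distinguishing the fake elliptic curve family from the a priori admissible real--quadratic split family of abelian surfaces is genuine. I expect to handle it by the Higgs--field comparison above, using normality to pin down the variation of Hodge structure, and to cross--check against the threefold classification of \cite{JRproj}.
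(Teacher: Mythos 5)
Your use of \Theo{main} together with abundance in dimension $\le 3$, and the dimension count $\dim M = 1 + ng \ge 3$ forcing $\dim M = 3$, $n = 1$, $g = 2$, reproduce exactly the paper's route to the corollary. The genuine problem is your last step. You claim that two branches of \Theo{main}, 2.) yield $g=2$: the non--split case with $d=1$ and the split case with $d=2$. The second branch does not exist. By \Formel{CorR}, for $d$ even one has $Cor_{F/\KQ}(A) \otimes_{\KQ} \KR \simeq M_{2^{d-1}}(\OH)$, so $B \otimes_{\KQ} \KR \simeq \OH$ is definite, whereas a split algebra $B \simeq M_2(\KQ)$ would give $B \otimes_{\KQ} \KR \simeq M_2(\KR)$. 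Hence ``$B$ split'' can occur only for $d$ odd, and $g = 2^{d-1} = 2$ would force $d = 2$ --- a contradiction. So the unique family with two--dimensional fibers is $d = 1$, $F = \KQ$, $A = B$ an indefinite division quaternion algebra, i.e.\ precisely the modular families of fake elliptic curves; this is what the paper means by ``the smallest possible dimension is obtained for $F = \KQ$, $A = B$'', and the corollary then follows from \Theo{main} with no further input.

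Moreover, the tool you propose for eliminating the (nonexistent) split branch is unsound. You assert that flatness and normality of the connection force the weight--one variation of Hodge structure to be of quaternionic PEL type, so that $End_{\KQ}(Z_\tau)$ must contain a quaternion algebra. The converse direction of \Theo{main} refutes this principle: in the split case with $d = 3$ (Mumford's families, $g = 4$) one has $End_{\KQ}(Z_\tau) \simeq \KQ$, and yet by the theorem these families do carry a flat holomorphic projective connection. So your Higgs--field criterion, if it worked as stated, would prove too much. Your fallback --- invoking the threefold classification of \cite{JRproj} --- is logically admissible, since that is an independent earlier result, but it bypasses rather than completes the deduction from \Theo{main}; the clean repair is the parity observation above, which is already contained in the paper's formula \Formel{CorR}.
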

The Corollary includes Kobayashi and Ochai's classification in the (flat, projective) surface case and the authors' previous result on projective threefolds.

\

\noindent {\bf Acknowledgements.} The authors want to thank N. Nakayama for valuable explanations concerning abelian fibrations.

\

\noindent {\bf Notations.} We consider only complex manifolds $M$; $T_M$ denotes the holomorphic tangent bundle, $\Omega_M^1$ the dual bundle of holomorphic one forms. We do not distinguish between line bundles and divisors on $M$. $K_M = \det \Omega_M^1$ denotes the canonical divisor.

\section{Holomorphic normal projective connections} \setcounter{equation}{0}

There are various definitions of projective structures and connections. We essentially follow Kobayashi
and Ochiai (\cite{KO}). 

\subsection{Holomorphic normal projective connections (H.n.p.c.)}
First recall the definition of the {\em Atiyah class} (\cite{At}). Associated to a holomorphic vector bundle $E$ on the complex manifold $M$ one has the first jet sequence
  \[0 \lra \Omega_M^1 \otimes E \lra J_1(E) \lra E \lra 0.\]
Obstruction to the holomorphic splitting is a class $b(E) \in H^1(M, Hom(E,E)\otimes \Omega_M^1)$ called {\em Atiyah class of $E$}. For properties of $b(E)$ see \cite{At}. We only mention that if $\Theta^{1,1}$ denotes the $(1,1)$--part of the curvature tensor of some differentiable connection on $E$, then, under the Dolbeault isomorphism, $b(E)$ corresponds to $[\Theta^{1,1}] \in H^{1,1}(M, Hom(E,E))$. In particular, for $M$ K\"ahler, $tr(b(E)) = -2i\pi c_1(E) \in H^1(M, \Omega_M^1)$. This is why we normalise and put $a(E) := -\frac{1}{2i\pi}b(E)$.

\begin{definition}
$M_m$ carries a holomorphic normal projective connection (h.n.p.c.) if the (normalised) Atiyah class of the holomorphic cotangent bundle
has the form
  \begin{equation} \label{AtProj} 
a(\Omega_M^1) = \frac{c_1(K_M)}{m+1} \otimes id_{\Omega_M^1} + id_{\Omega_M^1}
  \otimes \frac{c_1(K_M)}{m+1} \in H^1(M, \Omega_M^1 \otimes T_M \otimes
  \Omega_M^1).
  \end{equation}
(Note $\Omega_M^1 \otimes T_M \otimes
\Omega_M^1 \simeq
\End(\Omega_M^1) \otimes \Omega_M^1 \simeq \Omega_M^1 \otimes
\End(\Omega_M^1)$.) 
\end{definition}
It was shown in \cite{MM} how a holomorphic cocycle solution to
\Formel{AtProj} can be thought of as a $\KC$--bilinear holomorphic connection map
  $\Pi: T_M \times T_M \to T_M$
satisfying certain rules modelled after the Schwarzian derivative. Conversely, the existence of such a connection implies \Formel{AtProj}.

Let $\tilde{M} \to M$ be finite {\'e}tale. Then $\tilde{M}$ carries a h.n.p.c. if and only if $M$ carries a h.n.p.c.

\subsection{Holomorphic projective structures} The manifold $M$ is said to admit a {\em holomorphic projective structure} or a {\em flat} h.n.p.c. if there exists an atlas $\{(U_i, \varphi_i)\}_{i \in I}$ with holomorphic maps $\varphi_i: U_i \hookrightarrow \PN_m(\KC)$ such that
  \[\varphi_i \circ \varphi_j^{-1}: \varphi_j(U_{ij}) \lra \varphi_i(U_{ij})\]
is the restriction of some $g_{ij} \in PGl_m(\KC)$ whenever $U_{ij} = U_i \cap U_j \not= \emptyset$. The following fact is fundamental: {\em If $M$ admits a holomorphic projective structure, then $M$ admits a h.n.p.c}. Whether the convers is true is not known. $M$ admits a holomorphic projective structure if and only if $\Pi = 0$ is a cocycle solution to \Formel{AtProj}.

Let $\tilde{M} \to M$ be {\'e}tale. Then $\tilde{M}$ and $M$ are locally isomorphic and $\tilde{M}$ carries a holomorphic projective structure if and only if $M$ does.

\begin{example} \label{Exmpl} Examples of projective or K\"ahler manifolds admitting a (flat) h.n.p.c. are:
 
1.) $\PN_m(\KC)$. 

2.) Any manifold $M = M_m$ whose universal covering space $\tilde{M}$ can be embedded into $\PN_m(\KC)$ such that $\pi_1(M)$ acts by restricting automorphisms from $PGl_{m+1}(\KC)$.

3.) Ball quotients, i.e., $M$ with $\tilde{M} = \BN_m(\KC)$, the non compact dual of $\PN_m(\KC)$ in the sense of hermitian symmetric spaces. This is a special case of 2.).

4.) Finite {\'e}tale quotients of tori.

\noindent There are more examples without the assumption $M$ projective/K\"ahler: 

5.) tori, certain Hopf manifolds, twistor spaces over conformally flat Riemannian
fourfolds.
\end{example}

\begin{rem}
  There is an analogous notion of an $S$--structures, $S$ an arbitrary hermitian symmetric space of the compact type (\cite{KObook}). The rank one case $S = \PN_m(\KC)$ is somewhat special in this context.
\end{rem}

\subsection{Development} (\cite{KO}, \cite{KoWu}) Let $M_m$ carry a flat h.n.p.c. and let $\{(U_i, \varphi_i)\}_{i\in I}$ be a projective atlas. 
Choose one coordinate chart $(U_0, \varphi_0)$. Given a point $p \in M_m$, choose a chain of charts $(U_i, \varphi_i)_{i=0, \dots, r}$ such that $U_i \cap U_{i-1} \not= \emptyset$, $i = 1, \dots, r$ and $p \in U_r$. We have
 \[\varphi_{i-1}\circ \varphi_i^{-1} = g_i|_{\varphi_i(U_i\cap U_{i-1})}\]
for projective transformations $g_i \in PGl_m(\KC)$. Set $\psi(p) := (g_1 \circ g_2 \circ \cdots \circ g_r\circ \varphi_r)(p) \in \PN_m(\KC)$. This gives a multivalued map $\psi: M \to \PN_m(\KC)$ which is single valued in the case $M$ simply connected. It is called a {development} of $M$ to $\PN_m(\KC)$. By construction, $\psi$ is locally an isomorphism.

If $M$ is not simply connected, then let $\tilde{M}$ be its universal covering space and let $\psi: \tilde{M} \to \PN_m(\KC)$ be a development of $\tilde{M}$. We get a natural map $\rho: \pi_1(M) \to PGl_m(\KC)$ such that $\psi(\gamma(p)) = \rho(\gamma)(\psi(p))$ for any $\gamma \in \pi_1(M)$ (\cite{KoWu}).

\begin{example} Two examples of neighborhoods of an elliptic curve:
  
1.) Let $U \subset \PN_2$ be an open neighborhood of an elliptic curve $E$ such that $U$ is a retract of $E$. Then $U$ inherits a holomorphic projective structure from $\PN_2$. Let $\mu: \tilde{U} \to U$ be the universal covering space of $U$. Then $\tilde{U}$ inherits a holomorphic projective atlas from $U$ such that $\mu$ is a development of $\tilde{U}$.

2.) Let $Z \lra C$ be a modular family of elliptic curves as in example~\ref{ellcurv}. Then the embedding
  \[\iota: \tilde{Z} = \KC \times \Sieg_1 \hookrightarrow \PN_2(\KC), \quad (z, \tau) \mapsto [z:\tau:1].\]
is a development of $\tilde{Z}$.
\end{example}

\subsection{Chern Classes}
Let $M_m$ be as above compact K\"ahler with a h.n.p.c. Similar to projectice space one has (\cite{KO}):
\begin{equation} \label{chern}
  c_r(M) = \frac{1}{(m+1)^r}{m+1 \choose r}c_1^r(M), \; r=0, \dots, m \; \quad \mbox{in }
H^r(M, \Omega_M^r).
 \end{equation}
In particular, $2(m+1)c_2(M) = mc_1^2(M)$.

\section{Birational Classification (flat case)} \setcounter{equation}{0}
In the K\"ahler Einstein case, $2(m+1)c_2(M) = mc_1^2(M)$ if and only if the holomorphic sectional curvature of $M_m$ is constant. This gives the following result due to Kobayashi and Ochiai (\cite{KO}) from the introduction:
\begin{theorem} \label{KE}
  Let $M$ be K\"ahler-Einstein with a h.n.p.c. Then
  \begin{enumerate}
   \item $M \simeq \PN_m(\KC)$ or
   \item $M$ is an {\'e}tale quotient of a torus or
   \item $M$ is a ball quotient.
  \end{enumerate}
\end{theorem}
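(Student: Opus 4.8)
The plan is to reduce the statement to the classical theory of complex space forms, feeding the Chern-class identity that the h.n.p.c. produces into the Kähler--Einstein hypothesis. First I would record the numerical input. Since $M$ carries a h.n.p.c., formula \eqref{chern} holds; in particular
\[2(m+1)c_2(M) = mc_1^2(M) \quad \text{in } H^{2,2}(M).\]
Writing $\omega$ for the Kähler--Einstein form, this gives $\int_M \bigl(2(m+1)c_2(M) - mc_1^2(M)\bigr)\wedge \omega^{m-2} = 0$.

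Second, I would invoke the \emph{Chen--Ogiue inequality}: for any compact Kähler--Einstein manifold $M_m$ one has $\int_M \bigl(2(m+1)c_2 - mc_1^2\bigr)\wedge \omega^{m-2}\ge 0$, with equality if and only if $M$ has constant holomorphic sectional curvature. The vanishing from the previous step therefore forces the holomorphic sectional curvature of the Einstein metric to be a constant $c\in\KR$. This is the step where the Einstein assumption is indispensable: without it the cohomological identity carries no pointwise curvature information. I expect this to be the main obstacle, since it rests on the sharp pointwise computation underlying Chen--Ogiue together with a careful analysis of its equality case, relating the full curvature tensor of an Einstein metric to its holomorphic sectional curvature.

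Third, I would apply the classification of complete Kähler manifolds of constant holomorphic sectional curvature (the complex space-form theorem). Up to a holomorphic isometry of the universal covering space, $M$ is covered by $\PN_m(\KC)$ with the Fubini--Study metric if $c>0$, by $\KC^m$ with the flat metric if $c=0$, and by the ball $\BN_m(\KC)$ with the Bergman metric if $c<0$. The sign of $c$ is controlled by the sign of the Einstein constant, hence by that of $c_1(M)$.

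Finally, I would read off the three alternatives. If $c>0$, the universal cover $\PN_m(\KC)$ is compact and simply connected and the deck group acts freely by Fubini--Study isometries; simple connectivity of $\PN_m(\KC)$ forces $M\simeq \PN_m(\KC)$, giving case~(1). If $c=0$, then $M$ is a compact flat Kähler manifold, so by Bieberbach a finite-index lattice of translations yields a torus finitely and freely covering $M$; thus $M$ is a finite {\'e}tale quotient of a torus, case~(2). If $c<0$, then $M = \BN_m(\KC)/\Gamma$ is by definition a ball quotient, case~(3). These exhaust the possibilities and prove the theorem.
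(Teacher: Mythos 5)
Your proof follows essentially the same route as the paper, which itself only sketches the argument and defers to Kobayashi--Ochiai: the h.n.p.c.\ gives the Chern-class identity \Formel{chern}, the equality case of the Chen--Ogiue inequality converts this into constancy of the holomorphic sectional curvature of the K\"ahler--Einstein metric, and the space-form classification then yields the three cases. The only justification needing repair is in the case $c>0$: it is not the simple connectivity of $\PN_m(\KC)$ that forces $M \simeq \PN_m(\KC)$ (a simply connected cover can admit nontrivial free quotients, e.g.\ $S^2 \to \mathbb{RP}^2$), but rather the fact that every holomorphic isometry of $\PN_m(\KC)$, i.e.\ every element of $PU(m+1)$, has a fixed point, so the deck group acting freely must be trivial.
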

Recall that K\"ahler-Einstein implies $\pm K_M$ ample or $K_M \equiv 0$. Conversely, by Aubin and Yau's proof of the Calabi conjecture, $K_M$ ample implies $M$ K\"ahler-Einstein.

Consider the projective case from now on, i.e., let $M_m$ be a projective manifold with a h.n.p.c., not necessarily flat. If $K_M$ is not nef, then $M$ contains a rational curve by the cone theorem (\cite{KM}) meaning that there exists a non--constant holomorphic map
  \[\nu: \PN_1(\KC) \lra M.\]
Then already $M \simeq \PN_m(\KC)$:

\begin{proposition} \label{ratcurve}
  Let $M_m$ be a projective manifold with a h.n.p.c. If $M$ contains a rational curve, then $M \simeq \PN_m(\KC)$.
\end{proposition}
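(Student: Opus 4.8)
The plan is to produce a rational curve whose normalization is an immersion, to restrict the defining identity \Formel{AtProj} to it, and to read off from the Atiyah class that the curve must be a ``line''; a classical characterization of projective space then finishes the argument.

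First I would reduce to a convenient rational curve. Since $M$ contains a rational curve, I pass to one of minimal degree (with respect to a fixed polarisation) and take its normalisation $\nu: \PN_1(\KC) \lra M$; after this reduction I may assume that $\nu$ is birational onto its image and a general-point immersion (this is standard bend-and-break / deformation theory, \cite{KM}). On $\PN_1$ the pullback splits, $\nu^*T_M \simeq \bigoplus_{i=1}^m \O(a_i)$ with $a_1 \ge \dots \ge a_m$, and dually $\nu^*\Omega_M^1 \simeq \bigoplus_i \O(-a_i)$. Because $\nu$ is immersed, the differential identifies $T_{\PN_1} = \O(2)$ with a direct summand, so the top summand is exactly $a_1 = 2$. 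Note $-K_M \cdot C = \sum_i a_i$; the whole point is to determine the remaining $a_i$.

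The main step is to restrict \Formel{AtProj} to $C$. By functoriality of the Atiyah class, $a(\nu^*\Omega_M^1)$ is the image of $\nu^* a(\Omega_M^1)$ under the codifferential $\nu^*\Omega_M^1 \lra \Omega^1_{\PN_1}$ applied to the cohomology-form factor, so \Formel{AtProj} becomes an identity in $H^1(\PN_1, \Omega^1_{\PN_1}\otimes \End(\nu^*\Omega_M^1))$. Write $\beta = c_1(K_M)/(m+1) \in H^1(M, \Omega_M^1)$. The key observation is that
\[\nu^*\beta \in H^1(\PN_1, \nu^*\Omega_M^1) = \bigoplus_i H^1(\PN_1, \O(-a_i)),\]
and that $H^1(\PN_1, \O(-a_i)) \ne 0$ only when $a_i = 2$, i.e. only in the summand dual to the tangent direction of the curve; hence $\nu^*\beta$ is supported entirely in that single summand. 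Since the Atiyah class $a(\nu^*\Omega_M^1)$ of a split bundle is diagonal with entries $c_1(\O(-a_i)) = -a_i$, I compare the diagonal entries of the two sides, all of which live in $H^1(\PN_1, \O(-2)) \cong \KC$. For each non-tangent summand this gives $-a_i = \beta_1$, while the symmetric form of the right-hand side of \Formel{AtProj} contributes twice on the tangent summand, giving $-a_1 = 2\beta_1$; here $\beta_1 = -\sum_j a_j/(m+1)$. Solving, using $a_1 = 2$, forces $a_2 = \dots = a_m = 1$ and $\sum_j a_j = m+1$, that is
\[\nu^*T_M \simeq \O(2)\oplus \O(1)^{\oplus(m-1)}, \qquad -K_M \cdot C = m+1.\]

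Finally, $\nu^*T_M$ is ample, so $C$ is a very free rational curve; in particular $M$ is rationally connected and $C$ sweeps out $M$, so through a general point there passes a rational curve of anticanonical degree $m+1 = \dim M + 1$. By the characterisation of projective space due to Cho, Miyaoka and Shepherd-Barron (in the sharpened form of Kebekus) this forces $M \simeq \PN_m(\KC)$, as claimed. I expect the main obstacle to be precisely the heart of the argument: setting up the functorial restriction of the Atiyah class correctly and, above all, isolating the observation that the normal components of $c_1(K_M)$ vanish along the curve, so that $\nu^*\beta$ concentrates in the tangent summand — this is exactly what rigidifies the splitting type. A secondary technical point, which is standard but must be dealt with, is guaranteeing at the outset that the chosen rational curve may be taken immersed and birational onto its image, so that the top summand is genuinely $\O(2)$ rather than $\O(2k)$ for a multiple cover.
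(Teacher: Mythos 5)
Your plan---restrict \Formel{AtProj} to a rational curve and read off the splitting type---is not the paper's proof. The paper uses flatness directly: since $\PN_1(\KC)$ is simply connected, $\nu$ lifts to the universal cover, and composing with the development gives $\nu^*T_M = (\psi\circ\tilde\nu)^*T_{\PN_m(\KC)}$, which is ample; Mori's theorem (\cite{Mori}, \cite{MP} I, Theorem 4.2) then finishes in three lines. Your Atiyah-class route is essentially the strategy of \cite{JRproj} for the general (non-flat) case, so it is a legitimate plan and would even prove more, since you never use flatness. However, as executed it contains genuine errors.

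First, both pillars of your restriction computation fail. (a) An immersion does \emph{not} identify $T_{\PN_1}=\O(2)$ with a direct summand, nor does it force the top summand to be $2$: for the conic $\nu:\PN_1\hookrightarrow\PN_2(\KC)$ one has $\nu^*T_{\PN_2}\simeq\O(3)\oplus\O(3)$, and $\O(2)$ is a subbundle which is not a summand. (Moreover, a rational curve of minimal degree need not be immersed at all; the results guaranteeing this concern dominating families, i.e.\ presuppose uniruledness, which you do not yet have.) (b) $H^1(\PN_1,\O(-a_i))\neq 0$ holds whenever $a_i\geq 2$, not only for $a_i=2$; so your ``key observation'' that $\nu^*\beta$ concentrates in the tangent summand silently assumes that no other summand has $a_i\geq 2$ --- which is precisely the splitting statement you are trying to prove. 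The argument is circular at its central step. It can be repaired, but differently: for every summand with $a_i\leq 1$ \emph{both} cross-term factors vanish, since $H^1(\PN_1,\O(-a_i))=0$ \emph{and} $H^0(\PN_1,\O(a_i-2))=0$, so the diagonal entries give $a_i = \frac{-K_M\cdot C}{m+1}$ for all such $i$; as $d\nu\neq 0$ forces some $a_j\geq 2$, an integrality and counting argument then yields either $\nu^*T_M\simeq\O(2)\oplus\O(1)^{\oplus(m-1)}$ or $a_i\geq 2$ for all $i$. In both cases $\nu^*T_M$ is ample and $-K_M\cdot C\geq m+1$, and this holds for an \emph{arbitrary} non-constant $\nu:\PN_1(\KC)\lra M$, with no immersion, birationality or minimality hypotheses needed.

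Second, your final step is also invalid as stated. The existence of one very free rational curve of anticanonical degree $m+1$ through general points does not characterize projective space: on the blow-up of $\PN_m(\KC)$ in a point, the strict transform of a general line is very free with $-K\cdot C = m+1$, yet the blow-up is not $\PN_m(\KC)$. The Cho--Miyaoka--Shepherd-Barron/Kebekus criterion requires $-K_M\cdot C'\geq m+1$ for \emph{every} rational curve through a general point. In your situation this is in fact available, because the (repaired) Atiyah-class computation above applies to every rational curve in $M$ --- but you must invoke it for all of them, not just your chosen one. Alternatively, conclude as the paper does: ampleness of $\nu^*T_M$ for a rational curve of minimal degree, plus Mori's arguments (\cite{Mori}, \cite{MP} I, Theorem 4.2), gives $M\simeq\PN_m(\KC)$.
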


\noindent We include here a proof in the case of a flat connection, for the general case see \cite{JRproj}.

\begin{proof} Let $\psi: \tilde{M} \lra \PN_m(\KC)$ be a development of the universal covering space $\mu: \tilde{M} \lra M$. We have an induced map $\tilde{\nu}: \PN_1(\KC) \lra \tilde{M}$ such that $\mu \tilde{\nu} = \nu$. Then $\nu^*T_M = (\psi \circ \tilde{\nu})^*T_{\PN_m(\KC)}$ is ample. This forces $M \simeq \PN_m(\KC)$ by Mori's proof of Hartshorne's conjecture (\cite{Mori}, in particular \cite{MP} I, Theorem 4.2.).
\end{proof}

\begin{rem}
  \Prop{ratcurve} holds mutatis mutandis for any projective manifold with a flat $S$--structure, $S$ some irreducible hermitian symmetric space of the compact type: the existence of a rational curve implies $M$ uniruled and $M \simeq S$ by \cite{HM}.  
\end{rem}

Thus if $M \not\simeq \PN_m(\KC)$, then $K_M$ is nef and $M$ does not contain any rational curve. This has strong consequences: any rational map
  \[\xymatrix{M' \ar@{..>}[r] & M}\]
from a manifold $M'$ must be holomorphic. Indeed, if we first had to blow up $M'$ in order to make it holomorphic, then we could find a rational curve in $M$.  A manifold with this property is sometimes called {\em strongly minimal} or {\em absolutely minimal} (\cite{MoriClass}, \S 9).

\

The following is taken from  (\cite{Ko}, 1.7.): $M$ is said to have {\em generically large fundamental group} if for any $p \in M$ general and for any irreducible complex subvariety $p \in W \subset M$ of positive dimension
  \[{\rm Im}[\pi_1(W_{norm}) \lra \pi_1(M)] \quad \mbox{is infinite,}\]
where $W_{norm}$ denotes the normalization of $W$. In our case:

\begin{proposition} \label{lfg}
  Any $M \not\simeq \PN_m(\KC)$ with a flat h.n.p.c. has generically large fundamental group.
\end{proposition}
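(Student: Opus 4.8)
The plan is to argue by contradiction, playing the positivity of the development against the nefness of $K_M$ furnished by \Prop{ratcurve}. Assume $M\not\simeq\PN_m(\KC)$ fails to have generically large fundamental group. By the negation of the definition there is a positive--dimensional irreducible subvariety $W\subset M$ for which the image $G:={\rm Im}[\pi_1(W_{norm})\to\pi_1(M)]$ is \emph{finite}, and I want to show that such a $W$ cannot coexist with $K_M$ nef. The structural input I would exploit is that the development $\psi\colon\tilde M\to\PN_m(\KC)$ of the universal covering $\mu\colon\tilde M\to M$ is a local biholomorphism, so that $\psi^*T_{\PN_m(\KC)}\cong T_{\tilde M}\cong\mu^*T_M$; since $T_{\PN_m(\KC)}$ is ample, transporting it back through $\psi$ will force positivity of $T_M$ along $W$, and hence produce a curve on which $-K_M$ is positive.

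First I would lift $W$ to the universal covering. Let $\nu\colon W_{norm}\to M$ be the normalization followed by the inclusion, a finite morphism onto $W$, and form the fibre product $W_{norm}\times_M\tilde M$, which is a covering space of $W_{norm}$. Each connected component $\hat W$ is the covering of $W_{norm}$ associated to $\ker[\pi_1(W_{norm})\to\pi_1(M)]$; as this kernel is normal, $q\colon\hat W\to W_{norm}$ is a Galois covering with group $G$. Because $G$ is finite and $W_{norm}$ is compact, $\hat W$ is a compact normal projective variety and $q$ is finite {\'e}tale. The second projection endows $\hat W$ with a morphism $\sigma\colon\hat W\to\tilde M$, and $\nu':=\mu\circ\sigma=\nu\circ q\colon\hat W\to M$ is finite onto $W$.

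Next I would extract the positivity. Composing with the development gives $\psi\circ\sigma\colon\hat W\to\PN_m(\KC)$. Since $\psi$ is locally biholomorphic and $\nu'=\mu\sigma$ is finite, the map $\psi\sigma$ contracts no curve and has image of dimension $\dim W>0$; being proper and quasi--finite, it is finite onto its image. Therefore
\[(\psi\sigma)^*T_{\PN_m(\KC)}=\sigma^*\psi^*T_{\PN_m(\KC)}\cong\sigma^*T_{\tilde M}\cong(\mu\sigma)^*T_M=(\nu')^*T_M\]
is the pullback of the ample bundle $T_{\PN_m(\KC)}$ along a finite morphism, hence ample on $\hat W$. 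Now pick any irreducible curve $B\subset\hat W$ with normalization $\tilde B$. Then $(\nu')^*T_M|_{\tilde B}$ is an ample vector bundle on the smooth curve $\tilde B$, so its determinant $(\nu')^*(-K_M)|_{\tilde B}$ has positive degree. By the projection formula $K_M\cdot\nu'_*\tilde B=\deg((\nu')^*K_M|_{\tilde B})<0$, and $\nu'_*\tilde B$ is a nonzero effective curve because $\nu'$ is finite. This contradicts the nefness of $K_M$ proved in \Prop{ratcurve}. Hence no such $W$ exists, and $M$ has generically large fundamental group.

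The main obstacle I anticipate is the opening reduction: converting the failure of ``generically large'' into one honest positive--dimensional subvariety with finite $\pi_1$--image. This is cleanest through Koll\'ar's theory of the $\pi_1$--Shafarevich map, a general fibre of which is exactly such a $W$; $M$ has generically large fundamental group precisely when this map is generically finite. The remaining bookkeeping---that $\hat W$ is compact, that $q$, $\sigma$ and $\nu'$ are finite, and that $\psi\sigma$ is finite onto its image so that ampleness of $T_{\PN_m(\KC)}$ propagates to $(\nu')^*T_M$---is routine once the local--biholomorphism property of the development is in hand, and the concluding Mori--type step is then immediate from $K_M$ being nef.
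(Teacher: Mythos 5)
Your proof is correct and follows essentially the same route as the paper: pass to the finite \'etale covering determined by the kernel of $\pi_1(W_{norm})\to\pi_1(M)$ (compact because the image is finite), lift to $\tilde M$, compose with the development $\psi$ to pull back the ample bundle $T_{\PN_m(\KC)}$, and contradict the nefness of $K_M$ from \Prop{ratcurve}. The only (cosmetic) difference is that the paper first cuts $W_{norm}$ down to a general curve $C_{norm}$ and runs the covering/development argument there, which lets it avoid your discussion of finiteness of $\psi\sigma$ and ampleness of vector bundles on the possibly singular $\hat W$, whereas you take the covering of all of $W_{norm}$ and only restrict to a curve at the end.
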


\begin{proof} By \Prop{ratcurve},  $M \not\simeq \PN_m(\KC)$ implies $K_M$ nef.
Assume to the contrary ${\rm Im}[\pi_1(W_{norm}) \to \pi_1(M)]$ is finite for some $W$ as above. Denote the normalization map by $\nu: W_{norm} \to W$.

Let $C_{norm}$ be some general curve in $W_{norm}$, i.e., the intersection of $\dim W - 1$ general hyperplane sections. Think of $C_{norm}$ as the normalization of $C := \nu(C_{norm}) \subset W$. We have
  \[\pi_1(C_{norm}) \lra \pi_1(W_{norm}) \lra \pi_1(M).\]
Then ${\rm Im}[\pi_1(C_{norm}) \lra \pi_1(M)]$ is finite. The kernel of $\pi_1(C_{norm}) \lra \pi_1(M)$ induces a finite {\'e}tale covering $C' \lra C_{norm}$ from a compact Riemann surface $C'$, such that $\mu: C' \lra C$ factors over $\tilde{M}$, the universal covering space of $M$. Let $\psi: \tilde{M} \lra  \PN_m(\KC)$ be a development. Denote the induced map $C' \lra \tilde{M} \lra \PN_m(\KC)$ by $\psi_1$. Then $\mu^*T_M = \psi_1^*T_{\PN_m(\KC)}$ is ample, contradicting $K_M$ nef.
\end{proof}

Now assume $M$ is abundant, i.e., that some multiple of $K_M$ is in fact spanned, defining the Iitaka fibration
  \begin{equation} \label{Iitaka}
    f: M \lra Y
  \end{equation}
onto some normal projective variety $Y$ of dimension $\dim Y = \kappa(M)$. Since $M$ does not contain any rational curve, $f$ is equidimensional (\cite{Kaw}, Theorem 2). Let $F$ be a general (connected) fiber. Then $K_F = K_M + \det N_{F/M}$ is torsion in $\Pic(F)$ and there exists a finite {\'e}tale map $\tilde{F} \to F$ such that $K_{\tilde{F}}$ is trivial. By Beauville's decomposition theorem (\cite{Bo}, \cite{Be}) we may assume
   \[\tilde{F} \simeq A_y \times B_y\]
where $A_y$ is abelian and $B_y$ is simply connected. By \Prop{lfg} $B_y$ must be a point. Thus $M$ admits a fibration whose general fiber is covered by an abelian variety (see again \cite{JRproj} for the non-flat case).

\

We say that the Iitaka fibration defines an {\em abelian group scheme structure on $M$}, in the case $Y$ smooth, $f$ submersive, every fiber of $f$ is a smooth abelian variety, and $f$ admits a smooth section. The next proposition is mainly a consequence of a result of Koll\'ar:

\begin{proposition} \label{AbSc} Let $M$ be a projective manifold with a flat h.n.p.c. and assume $K_M$ is abundant. Then there exists a commuting diagram
  \[\xymatrix{M' \ar[r]\ar[d] & M \ar[d] \\
               N'\ar[r] & N}\]
with the folliwng property: (i) The vertical maps are the Iitaka fibrations of $M'$ and $M$, respectively. (ii) The map $M' \lra M$ is finite {\'e}tale. (iii) $M' \lra N'$ is an abelian group scheme.
\end{proposition}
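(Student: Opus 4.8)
The plan is to produce the finite \'etale cover $M' \lra M$ in two stages and then to verify the three required properties. From the discussion preceding the statement we already know that the Iitaka fibration $f: M \lra Y$ (here $N = Y$) is equidimensional, that $M$ contains no rational curve, and that a general fiber $F$ carries a finite \'etale cover $\tilde F \to F$ with $\tilde F$ abelian; thus the general fiber is, up to isogeny, an abelian variety. What remains is to globalize this: to arrange that every fiber is a smooth abelian variety over a smooth base and to manufacture a zero--section, all at the cost of a finite \'etale base change of $M$.

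First I would spread out the fiberwise cover. Over the Zariski--open $Y^\circ \subset Y$ where $f$ is smooth, the cover $\tilde F \to F$ propagates to a finite \'etale cover of $f^{-1}(Y^\circ)$ with abelian fibers; the obstruction is the monodromy action of $\pi_1(Y^\circ)$ on the finite datum attached to $\pi_1(F)$, which is finite and can be killed after replacing $Y^\circ$ by a finite \'etale cover. The decisive point is then to extend this cover across the fibers lying over $Y \setminus Y^\circ$ to a genuine finite \'etale cover $M' \lra M$ of the whole manifold, and this is exactly where the cited result of Koll\'ar enters: since $M \not\simeq \PN_m(\KC)$ forces $M$ to be strongly minimal (no rational curve) and to have generically large fundamental group by \Prop{lfg}, the relevant branching in codimension one and higher is excluded and the cover extends. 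Taking the Stein factorization of $M' \lra M \lra Y$ then yields $N' \lra N$ finite and realizes the left vertical arrow as the Iitaka fibration of $M'$ (note that $K_{M'}$ is the pullback of $K_M$, hence again nef and abundant), which gives the commuting square together with properties (i) and (ii).

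It remains to show that $f': M' \lra N'$ is an abelian group scheme, and this is the main obstacle. By construction the general fiber of $f'$ is now honestly abelian, and $M'$ still contains no rational curve. I would argue that this last property forces every fiber to be smooth abelian: a genuinely degenerate fiber of an abelian fibration acquires a nontrivial toric or additive part whose closure in the smooth total space contains rational curves, so the absence of rational curves in $M'$ makes $f'$ a smooth morphism with all fibers abelian and, in turn, forces $N'$ to be smooth. This is the step for which Nakayama's structure theory of abelian fibrations is the natural tool. Finally one must produce the zero--section: after the \'etale normalization already performed, the relative torsion is trivialized, so a component of a suitable finite torsion multisection (for instance coming from a relative polarization) can be chosen as a smooth section and absorbed into $M' \lra M$. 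Once a section is fixed, the smooth proper family with abelian fibers inherits a relative group structure through the relative $\Pic^0$, completing (iii). The delicate points throughout are the extension of the \'etale cover across the special fibers and the proof that the absence of rational curves really excludes every degeneration; both rest on the hypotheses $M \not\simeq \PN_m(\KC)$ and the generic largeness of $\pi_1(M)$.
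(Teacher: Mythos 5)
There is a genuine gap at the decisive step of your argument: the passage from ``general fiber abelian, no rational curves'' to ``every fiber is smooth abelian''. In the paper this step carries the whole weight of the hypothesis that the connection exists: after \cite{Ko}, 6.3.~Theorem produces a finite {\'e}tale cover $M' \to M$ whose Iitaka fibration $f'$ is \emph{birational} to an abelian group scheme $\varphi: A \to S$, strong minimality makes the birational map $h: A \to M'$ holomorphic, and then the flat h.n.p.c. is used as follows: since $dK_{M'} = f'^*H$, the class $h^*c_1(M')$ restricts to zero on every fiber $A_s$ of $\varphi$, so by \Formel{AtProj} the Atiyah class of $h^*T_{M'}|_{A_s}$ vanishes; a lemma on tori (a nonzero holomorphic section of a bundle with vanishing Atiyah class on a torus has empty zero locus) then upgrades the generically injective map $T_{A_s} \lra h^*T_{M'}|_{A_s}$ to a bundle injection, so $h$ immerses each $A_s$, every fiber of $f'$ is smooth abelian, $N'$ is smooth, and the group--scheme section of $\varphi$ transports through $h$ to a smooth section of $f'$. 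You replace all of this by the claim that a degenerate fiber of an abelian fibration must contain rational curves. That is not a theorem and is false as stated: multiple fibers of elliptic surfaces (logarithmic transforms) are non-reduced fibers supported on a smooth elliptic curve and contain no rational curves, and higher-dimensional abelian fibrations admit analogous non-reduced or quotient degenerations; the semistable ``toric part'' picture you invoke simply does not cover these. Tellingly, your proof of (iii) never uses the projective connection at all, whereas without it no statement of this strength is known -- so some appeal to the hypothesis, such as the Atiyah-class argument above, is unavoidable.

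Two secondary problems. First, your construction of the cover -- spreading the fiberwise cover over $Y^{\circ}$ and then ``extending'' it across $Y \setminus Y^{\circ}$ by appeal to strong minimality and \Prop{lfg} -- is not justified: a finite {\'e}tale cover of $f^{-1}(Y^{\circ})$ need not extend to a finite {\'e}tale cover of $M$, and Koll\'ar's theorem is not an extension statement of this kind; what \cite{Ko}, 6.3 actually provides (and what the paper uses) is directly a finite {\'e}tale $M' \to M$ such that the Stein factorization of $M' \to Y$ is birational to an abelian group scheme -- the globalization you are trying to do by hand is precisely its content. Second, the zero--section does not need torsion multisections or relative $\Pic^0$: once $h$ is holomorphic, the section is simply the image of the section of $\varphi$, and its smoothness and the absence of rational curves in $M'$ are what make it a genuine section of $f'$.
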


\begin{proof}[of \Prop{AbSc}] The general fiber of the Iitaka fibration $f: M \to Y$ is an {\'e}tale quotient of an abelian variety (see the explanations before \Prop{AbSc}). By \cite{Ko}, 6.3.~Theorem there exists a finite {\'e}tale covering $M' \to M$ such that $f': M' \to Y'$ is birational to an abelian group scheme $\varphi: A \to S$. Here $M' \to Y'$ is the Stein factorization of $M' \to M \to Y$. Note that $M'$ is abundant and $M' \lra Y'$ is the Iitaka fibration of $M'$.

After replacing $M$ by $M'$ we may assume that $M \to Y$ is birational to $A \to S$. We obtain a digram
  \begin{equation} \label{diagr}
   \xymatrix{A \ar[d]^{\varphi} \ar@{..>}[r]^h & M \ar[d]^f \\
              S  \ar@{..>}[r]^g & Y}
  \end{equation}
in which $h$ and $g$ are birational. The map $h$ must be holomorphic, as $M$ is strongly minimal. The map $\varphi$ has a smooth section. Then $g$ mus be holomorphic, too. There exists a Zariski open subset $U \subset Y$ such that the two fibrations $f: f^{-1}(U) \to U$ and $\varphi: \varphi^{-1}(g^{-1}(U)) \to g^{-1}(U)$ are isomorphic. The claim is that then $Y$ is smooth and $M \to Y$ is a smooth abelian fibration with a smooth section. 

First note that by \Formel{diagr} every fiber of $f$ is irreducible and reduced, and $h$ is generically 1:1 on every fiber $A_s$ of $\varphi$ for $s \in S$ arbitrary. We have a generically injective map
  \begin{equation} \label{bdlmap}
   T_{A_s} \lra T_A|_{A_s} \lra h^*T_{M}|_{A_s}
  \end{equation} 
and it remains to show that it is a bundle map. Indeed, then every fiber of $f$ is smooth, implying the smoothness of $Y$. The existence of a smooth section of $f$ then follows from the existence of a section of $\varphi$ and the absence of rational curves in $M$.

By construction, $dK_M = f^*H$ for some $H \in \Pic(Y)$. Then $(h^*c_1(M))|_{A_s} = 0$ in $H^1(A_s, h^*\Omega^1_M|_{A_s})$. By \Formel{AtProj} the Atiyah class of $h^*T_{M}|_{A_s}$ vanishes. The tangent bundle $T_{A_s}$ is trivial. The next Lemma shows that \Formel{bdlmap} is a bundle map. 
\end{proof}

\begin{lemma}
  Let $A$ be a torus and $E$ a vector bundle on $A$ with vanishing Atiyah class $a(E) \in H^1(A, E^* \otimes E \otimes \Omega_A^1)$. Let $0 \not= s \in H^0(A, E)$ be a holomorphic section. Then $Z(s) = \{p \in A | s(p) = 0\} = \emptyset$. 
\end{lemma}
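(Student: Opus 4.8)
The plan is to turn the vanishing of $a(E)$ into a flag of subbundles whose graded pieces are flat line bundles, and then read the non-vanishing of $s$ off the top of this flag. By Atiyah's theorem (\cite{At}) the hypothesis $a(E)=0$ says exactly that $E$ carries a global holomorphic connection $\nabla$. Over a complex torus such a bundle is homogeneous, and I would use the structure theory of homogeneous bundles on tori (Matsushima, Morimoto) to write $E\simeq\bigoplus_i(Q_i\otimes F_i)$ with $Q_i\in\Pic^0(A)$ and each $F_i$ \emph{unipotent}, i.e. an iterated self-extension of $\O_A$. Filtering each $F_i$ by its sub-$\O_A$'s, tensoring with $Q_i$, and concatenating over $i$, produces a filtration by holomorphic subbundles
\[0=E_0\subset E_1\subset\cdots\subset E_N=E,\qquad E_j/E_{j-1}=:Q_{(j)}\in\Pic^0(A).\]

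Granting this filtration, the conclusion is essentially formal. Let $j$ be minimal with $s\in H^0(A,E_j)\subset H^0(A,E)$; it exists since $s\in H^0(A,E_N)$. The sequence $0\to E_{j-1}\to E_j\to Q_{(j)}\to 0$ gives $0\to H^0(E_{j-1})\to H^0(E_j)\to H^0(Q_{(j)})$, so the image $\bar s\in H^0(A,Q_{(j)})$ of $s$ is non-zero -- otherwise $s\in H^0(E_{j-1})$, contradicting minimality. A degree zero line bundle with a non-zero section is trivial (an effective divisor numerically trivial on a compact K\"ahler manifold pairs to zero with a power of a K\"ahler class, hence is empty), so $Q_{(j)}\simeq\O_A$ and $\bar s$ is a non-zero constant, nowhere vanishing. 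For every $p\in A$ the image of $s(p)\in(E_j)_p$ in $(Q_{(j)})_p$ is then $\bar s(p)\neq 0$, whence $s(p)\neq 0$ already in $(E_j)_p\subset E_p$. Thus $Z(s)=\emptyset$.

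The entire difficulty is concentrated in the homogeneity (equivalently, flatness) of $E$, and this is the step I expect to be the real obstacle. One cannot simply declare $\nabla$ flat: on an abelian surface the trivial bundle already admits the non-flat holomorphic connection $d+\bigl(\begin{smallmatrix}dz_1&dz_2\\0&-dz_1\end{smallmatrix}\bigr)$, so flatness must be extracted from the isomorphism type of $E$, not from the given $\nabla$. What saves the situation is special to the torus: since $\pi_1(A)\simeq\KZ^{2n}$ is abelian there are no higher rank stable bundles with vanishing Chern classes, and the monodromy of any flat structure is simultaneously triangularizable, which is precisely the input that yields the $\Pic^0$-filtration above. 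Making the passage ``holomorphic connection $\Rightarrow$ homogeneous'' precise, rather than merely citing it, is the one place where genuine work is needed.
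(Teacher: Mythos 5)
Your proposal is correct granted the structure theorem you cite, and it takes a genuinely different route from the paper's. Both arguments hinge on the same geometric fact, homogeneity of $E$, but they obtain it and exploit it differently. The paper proves homogeneity directly: the vanishing of $a(E)$ splits the relative tangent sequence of $\pi: \PN(E) \lra A$ (by \cite{PePoSc}, 3.1.), so the translations of $A$ lift to automorphisms $t'_a$ of $\PN(E)$; pushing ${t'_a}^*\O_{\PN(E)}(1)$ down to $A$ gives $t_a^*E \simeq E \otimes L$, and $L$ is trivial because $\det E \in \Pic^0(A)$ is translation-invariant and the action is continuous. It then finishes not with a filtration but with invariance of sections: $A$ acts on $H^0(A,E)$, this action of a compact connected group is necessarily trivial, so $s(p+a)=s(p)$ for all $a \in A$, and a nonzero translation-invariant section cannot vanish anywhere. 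You instead import homogeneity and the decomposition $E \simeq \bigoplus_i (Q_i \otimes F_i)$ wholesale from Matsushima--Morimoto, and then run the $\Pic^0$-filtration argument; that endgame (minimal $j$, non-vanishing of $\bar s$, triviality of $Q_{(j)}$, hence $s(p) \neq 0$ already in the subbundle $E_j$) is airtight. The trade-off: the paper's argument is essentially self-contained apart from the splitting criterion of Peternell--Le Potier--Schneider, whereas yours leans on the full classification of bundles with holomorphic connections on complex tori, but in exchange becomes purely formal once that classification is granted.

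One warning: your closing sketch of why ``holomorphic connection $\Rightarrow$ homogeneous'' holds is circular, not merely incomplete. Simultaneous triangularizability of the monodromy presupposes that $E$ already carries a flat structure -- which is exactly what is to be established -- and the remark about the absence of higher-rank stable bundles with vanishing Chern classes would require knowing that $E$ is semistable, which is not given. So your instinct that this step is ``the real obstacle'' is right: as a citation of Matsushima/Morimoto the proof stands, but the heuristic you offer in its place would not survive being made precise along the lines you indicate. Note that the paper's $\PN(E)$-argument is precisely a self-contained way around this obstacle.
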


\begin{proof}
  Let $\PN(E)$ be the hyperplane bundle associated to $E$ and $\pi: \PN(E) \to A$ be the projection map. The vanishing of $a(E)$ implies the holomorphic splitting of the relative tangent bundle sequence
   \[0 \lra T_{\PN(E)/A} \lra T_{\PN(E)} \lra \pi^*T_A \lra 0\]
 (\cite{PePoSc}, 3.1.). Then $T_{\PN(E)} \simeq  T_{\PN(E)/A} \oplus \pi^*T_A$ and $A$ acts as a group on $\PN(E)$. For $a \in A$ let $t_a$ be the induced translation map on $A$ and let $t'_a$ be the induced automorphism of $\PN(E)$. We get a commutative diagram
 \begin{equation}
   \xymatrix{\PN(E) \ar[d] \ar[r]^{t'_a} & \PN(E) \ar[d] \\
              A  \ar[r]^{t_a} & A.}
  \end{equation}
Then ${t'_a}^*\O_{\PN(E)}(1) \simeq \O_{\PN(E)}(1) \otimes \pi^*L$ for some $L \in \Pic(A)$. The push forward to $A$ shows $t_a^*E \simeq E \otimes L$ (\cite{Ha}, III, 9.3.). Then $t_a^*\det E \simeq \det E \otimes L^{\otimes r}$ for $r = rk E$. As $\det E \in \Pic^0(A)$ is invariant under translations, $L^{\otimes r} \simeq \O_A$. Then $L$ is trivial as $A$ acts continously.

Then $t_a^*E \simeq E$ for any $a \in A$ implying that $A$ acts on $H^0(A, E)$. The action must be trivial. Then $s(p+a) = s(p)$ for any $a \in A$ implying $Z(s) = \emptyset$.
\end{proof}

\section{Torus fibrations}
\setcounter{equation}{0}
In this section $M_m$ denotes a compact K\"ahler manifold with a h.n.p.c. Assume the existence of a holomorphic submersion 
  \[f: M_m \lra N_n\]
such that every fiber is a torus. Assume that $f$ has a smooth holomorphic section. We have the exact sequence
   \begin{equation} \label{reltang}
    0 \lra f^*\Omega_N^1 \stackrel{df}{\lra} \Omega_M^1 \lra
    \Omega^1_{M/N} \lra 0
   \end{equation}
of holomorphic forms and
\[E = E^{1,0} = f_*\Omega_{M/N}^1\]
is a holomorphic rank $m-n$ vector bundle on $N$ such that $f^*E \simeq
\Omega_{M/N}^1$ via the canonical map $f^*f_*\Omega_{M/N}^1 \to
\Omega_{M/N}^1$.

\begin{proposition} \label{fam}
 In the above situation, $N$ admits a h.n.p.c., and
    \begin{equation} \label{AtE}
      a\big(E(-\frac{K_N}{n+1})\big) = 0 \quad
      \mbox{in } H^1(N, \End(E) \otimes
      \Omega_N^1),\end{equation}
  where $a$ denotes the normalised Atiyah class.
\end{proposition}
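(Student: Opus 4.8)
The plan is to pull the entire situation back to the base along the section and to exploit the naturality of the Atiyah class under holomorphic maps. Write $\sigma\colon N\to M$ for the given smooth section, so that $f\circ\sigma=\id_N$. Restricting along $\sigma$ the tangent sequence dual to \Formel{reltang} and using $f^*E\simeq\Omega^1_{M/N}$, the differential $d\sigma$ splits it; dualising yields a canonical decomposition
\[\sigma^*\Omega_M^1\;\simeq\;\Omega_N^1\oplus E\qquad\text{on }N,\]
with $d\sigma\colon\sigma^*\Omega_M^1\to\Omega_N^1$ the projection onto the first summand. This is the single place where both hypotheses enter: the section gives the splitting, and the torus structure of the fibers guarantees $\Omega^1_{M/N}=f^*E$, hence $\sigma^*\Omega^1_{M/N}=E$.

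First I would invoke functoriality of the normalised Atiyah class for $\sigma$, namely $a(\sigma^*\Omega_M^1)=(d\sigma)_*\,\sigma^*a(\Omega_M^1)$, where $(d\sigma)_*$ acts on the form factor $\Omega_M^1$ of $\End(\Omega_M^1)\otimes\Omega_M^1$. Since the Atiyah class of a direct sum is block diagonal, the left-hand side equals $a(\Omega_N^1)\oplus a(E)$ relative to
\[\End(\sigma^*\Omega_M^1)=\End(\Omega_N^1)\oplus\mathrm{Hom}(E,\Omega_N^1)\oplus\mathrm{Hom}(\Omega_N^1,E)\oplus\End(E).\]
On the right-hand side I would substitute the h.n.p.c.\ identity \Formel{AtProj} for $a(\Omega_M^1)$ and expand into these four blocks, setting $a_N:=(d\sigma)_*\sigma^*\bigl(\tfrac{c_1(K_M)}{m+1}\bigr)\in H^1(N,\Omega_N^1)$.

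A short bookkeeping of the three tensor slots then does the work. The identity endomorphism pulls back to $\id_{\Omega_N^1}\oplus\id_E$, while projecting the form slot onto $\Omega_N^1$ kills the $E$-contribution of the second summand of \Formel{AtProj}. One finds: on the $\End(E)$-block, $a(E)=\id_E\otimes a_N$ — the summand $\tfrac{c_1(K_M)}{m+1}\otimes\id$ contributes nothing here, because after the projection its endomorphism part annihilates the $E$-summand, so that only $\id\otimes\tfrac{c_1(K_M)}{m+1}$ survives; on the $\End(\Omega_N^1)$-block, $a(\Omega_N^1)=a_N\otimes\id_{\Omega_N^1}+\id_{\Omega_N^1}\otimes a_N$. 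The two off-diagonal blocks of the right-hand side are forced to vanish because the left-hand side is block diagonal, which serves as a consistency check.

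Finally, the $\End(\Omega_N^1)$-equation is precisely an h.n.p.c.\ relation for $N$; taking its trace and using $\operatorname{tr}a(\Omega_N^1)=c_1(K_N)$ forces $a_N=\tfrac{c_1(K_N)}{n+1}$, so $N$ carries an h.n.p.c. Feeding this back into the $\End(E)$-equation gives $a(E)=\id_E\otimes\tfrac{c_1(K_N)}{n+1}$, and since $a(E\otimes L)=a(E)+\id_E\otimes c_1(L)$ for a (rational) line bundle $L$, the choice $L=-\tfrac{K_N}{n+1}$ yields \Formel{AtE}. The main obstacle is entirely the multilinear bookkeeping: correctly tracking which of the three tensor factors of the Atiyah class the differential $d\sigma$ acts on, and checking that the first summand of \Formel{AtProj} contributes only off-diagonally after the projection. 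Once this is set up, both assertions of the proposition fall out of the two diagonal blocks.
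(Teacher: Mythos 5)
Your proposal is correct and its skeleton is exactly that of the paper's proof: restrict along the section, note that the transpose differential (the paper's $ds$, your projection $d\sigma$) splits the pulled-back sequence \Formel{reltang}, apply functoriality of the normalised Atiyah class to the form slot, use that the Atiyah class of a direct sum is block diagonal, and read $a(\Omega_N^1)$ and $a(E)$ off the two diagonal blocks; the normalisation and the final twist by $-\frac{K_N}{n+1}$ are also as in the paper. The one place where you genuinely diverge is the key vanishing on the $\End(E)$-block: the paper kills the first summand of \Formel{AtProj} through its \emph{output} slot, using the geometric fact that $c_1(K_M)$ is pulled back from $N$ (since $K_{M/N}=f^*\det E$ for a torus fibration), so that $s^*c_1(K_M)$ dies under $s^*\Omega_M^1 \to E$; you kill it through its \emph{input} slot, observing that after pushing the form factor by $d\sigma$ the endomorphism part of this summand factors through $d\sigma$, which annihilates the $E$-summand of the splitting. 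Both arguments are valid; yours is pure linear algebra of the splitting and does not need the pullback property of $K_M$, whereas the paper's observation is what makes the off-diagonal block $\mathrm{Hom}(\Omega_N^1,E)$ vanish by direct computation --- in your write-up that vanishing is instead (legitimately) deduced from block-diagonality of the left-hand side, so nothing is missing. Finally, you pin down $a_N=\frac{c_1(K_N)}{n+1}$ by tracing the $\End(\Omega_N^1)$-equation, where the paper traces $a(E)$ and takes determinants in the split sequence \Formel{reltang2}; these are equivalent computations.
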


\begin{rem} \label{remat}
1.) The formula is in terms of classes, we do not assume the
existence of a theta characteristic on $N$. 

2.) In the case $N$ a compact Riemann surface, \Formel{AtE} implies $E \simeq U \otimes \theta$ for some flat bundle $U$ coming from a representation of $\pi_1(N)$ (\cite{At}) and some theta characteristic $\theta$. 
\end{rem}

\Prop{fam} will be proved below, we will first derive some consequences. The
trace of the (normalised) Atiyah class gives the first Chern class, hence
  \begin{equation} \label{relK}
    c_1(E) = \frac{m-n}{n+1}c_1(K_N) \quad \mbox{in } H^1(N,
    \Omega_N^1).
  \end{equation}
Let as usual $K_{M/N} = K_M - f^*K_N$. Then $K_{M/N} = \det \Omega_{M/N}^1 = f^*\det E$. We may rewrite \Formel{relK} as follows:
\begin{corollary}\label{ChernEq}
  In the situation of \Prop{fam} the following identities hold in  $H^1(M, \Omega_M^1)$:
    \[c_1(K_{M/N}) = \frac{m-n}{n+1}c_1(f^*K_N) \quad \mbox{and} \quad
    c_1(K_M) = \frac{m+1}{n+1} c_1(f^*K_N)\]
  In particular, $c_1(K_M)$ and $c_1(f^*K_N)$ are proportional.
\end{corollary}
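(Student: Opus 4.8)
The plan is to read off both identities directly from the first Chern class relation \Formel{relK}, pulling it back along $f$ and combining with adjunction. Everything needed is already in place: \Prop{fam} gives \Formel{AtE}, and taking the trace of the normalised Atiyah class there — using ${\rm tr}(a(E)) = c_1(E)$, ${\rm tr}(\id_E) = \rk E = m-n$, and the twist formula $a\big(E(-\tfrac{K_N}{n+1})\big) = a(E) - \tfrac{1}{n+1}c_1(K_N)\otimes\id_E$ — produces \Formel{relK}. So no new geometry is required and the corollary is a purely cohomological reformulation in $H^1(M,\Omega^1_M)$.

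First I would record the identification already noted before the statement: from $f^*E \simeq \Omega^1_{M/N}$, taking determinants gives $K_{M/N} = \det\Omega^1_{M/N} = f^*\det E$. By functoriality of Chern classes under the pullback $f^*\colon H^1(N,\Omega^1_N) \to H^1(M,\Omega^1_M)$ induced by $df$, this yields $c_1(K_{M/N}) = f^*c_1(E)$. Applying $f^*$ to \Formel{relK} and using $f^*c_1(K_N) = c_1(f^*K_N)$ then gives the first identity
\[c_1(K_{M/N}) = f^*c_1(E) = \frac{m-n}{n+1}\,c_1(f^*K_N).\]
For the second identity I would add $c_1(f^*K_N)$ to both sides, invoke $K_M = K_{M/N} + f^*K_N$, and simplify the coefficient $\tfrac{m-n}{n+1} + 1 = \tfrac{m+1}{n+1}$:
\[c_1(K_M) = c_1(K_{M/N}) + c_1(f^*K_N) = \frac{m+1}{n+1}\,c_1(f^*K_N).\]
Proportionality of $c_1(K_M)$ and $c_1(f^*K_N)$ is then immediate.

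I do not anticipate a real obstacle, since once \Formel{relK} is granted the argument is linear algebra in cohomology. The only points deserving a line of care are that the isomorphism $f^*E \simeq \Omega^1_{M/N}$ is holomorphic, so that $f^*c_1(E)$ computed in $H^1(M,\Omega^1_M)$ genuinely coincides with $c_1(K_{M/N})$, and that the $\KQ$--coefficient $\tfrac{1}{n+1}$ entering through \Formel{relK} causes no trouble — harmless here, as all identities are asserted in the $\KC$--vector space $H^1(M,\Omega^1_M)$ rather than in an integral Picard group.
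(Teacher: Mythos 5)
Your proposal is correct and follows exactly the paper's route: the paper derives \Formel{relK} as the trace of \Formel{AtE} and then obtains the corollary by ``rewriting'' \Formel{relK}, which is precisely your pullback via $f^*$ using $K_{M/N} = f^*\det E$ and the adjunction $K_M = K_{M/N} + f^*K_N$. Your explicit verification of the coefficient arithmetic $\tfrac{m-n}{n+1}+1 = \tfrac{m+1}{n+1}$ and of the holomorphic identification $f^*E \simeq \Omega^1_{M/N}$ just spells out what the paper leaves implicit.
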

The next corollary covers in particular degenerate cases when $M_m$ is a product of tori and $f$ a projection or when $M_m = N_n = \PN_m(\KC)$ and $f$ is isomorphism of projective space:

\begin{corollary}
\begin{enumerate}
 \item 
If $c_1(K_M) = 0$ or $c_1(K_N) = 0$, then $M$ and $N$ are {\'e}tale quotients of tori.
 \item If $K_M$ or $K_N$ are not nef, then $M \simeq N \simeq \PN_m(\KC)$.
\end{enumerate}
\end{corollary}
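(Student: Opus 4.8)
The plan is to reduce both statements to facts about a single member of the pair $\{M,N\}$, exploiting the proportionality $c_1(K_M) = \frac{m+1}{n+1}c_1(f^*K_N)$ of \Cor{ChernEq} together with the smooth section $\sigma: N \to M$ of $f$. Since $f\circ\sigma = \id_N$, the pullback $f^*: H^\bullet(N) \to H^\bullet(M)$ is injective, being split by $\sigma^*$, and every curve $C' \subset N$ lifts to $\sigma(C') \subset M$ with $f_*\sigma(C') = C'$. Hence $c_1(K_N) = 0$ forces $c_1(f^*K_N) = 0$ and so $c_1(K_M) = 0$, while conversely $c_1(K_M) = 0$ forces $c_1(f^*K_N) = 0$ and, by injectivity of $f^*$, $c_1(K_N) = 0$; the two vanishings are thus equivalent. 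The same two ingredients give that $K_M$ is nef if and only if $K_N$ is: if $K_N$ is nef so is its pullback $f^*K_N$ and hence the positive multiple $K_M$, and if $K_N$ is not nef, a curve $C'$ with $K_N\cdot C' < 0$ lifts to $\sigma(C')$ with $K_M \cdot \sigma(C') = \frac{m+1}{n+1}(K_N\cdot C') < 0$. So in both parts the hypothesis on $M$ and the hypothesis on $N$ coincide.

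For 1.) I would argue on each manifold separately: $M$ is compact K\"ahler with a h.n.p.c., and so is $N$ by \Prop{fam}. For such a manifold $X$ of dimension $d$, the Chern identities \Formel{chern} read $c_r(X) = \frac{1}{(d+1)^r}\binom{d+1}{r}c_1(X)^r$, so $c_1(X) = 0$ forces all higher Chern classes to vanish as well. By Yau's solution of the Calabi conjecture, $c_1(X) = 0$ yields a Ricci--flat K\"ahler metric, so $X$ is K\"ahler--Einstein; \Theo{KE} then identifies $X$ with $\PN_d(\KC)$, an \'etale quotient of a torus, or a ball quotient. The first and third have $c_1 \ne 0$, so $X$ must be an \'etale quotient of a torus. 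Applying this to $X = M$ and $X = N$ — legitimate because the equivalence of the first paragraph guarantees $c_1(K_M) = 0$ and $c_1(K_N) = 0$ simultaneously — gives the claim.

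For 2.), by the equivalence above it suffices to treat the case $K_M$ not nef, the goal being $M \simeq \PN_m(\KC)$. Once this is known the fibration must collapse: a fiber $F$ is a torus, so $K_F = 0$, while its normal bundle $N_{F/M} \simeq f^*T_N|_F$ is trivial, whence adjunction gives $K_F = K_M|_F = -(m+1)H|_F$ for $H$ the hyperplane class. If $\dim F \ge 1$ then $H|_F$ is ample, hence nonzero, contradicting $K_F = 0$; so $\dim F = 0$, forcing $m = n$. A proper submersion of relative dimension $0$ is an \'etale covering, and one carrying a section with connected $M$ has degree one, so $f$ is an isomorphism and $N \simeq M \simeq \PN_m(\KC)$. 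To reach $M \simeq \PN_m(\KC)$ I would invoke \Prop{ratcurve}: non-nefness of $K_M$ should produce a rational curve (via the cone theorem / bend-and-break), and a manifold with a h.n.p.c. containing a rational curve is $\PN_m(\KC)$.

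The main obstacle is precisely this last step: \Prop{ratcurve} is stated for \emph{projective} manifolds, whereas here $M$ is only assumed compact K\"ahler and the connection is not assumed flat. The delicate points are therefore producing the rational curve from $K_M$ not nef within the K\"ahler category and verifying the hypotheses of \Prop{ratcurve} apply — most cleanly by first checking that a compact K\"ahler manifold with a h.n.p.c. and non-nef canonical bundle is automatically projective, which it will be, since it turns out to be $\PN_m(\KC)$. By contrast, the reductions of the first paragraph and the adjunction computation are routine, so everything hinges on the $\PN_m(\KC)$-characterization under the weaker K\"ahler hypothesis.
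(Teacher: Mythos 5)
Your argument is essentially the paper's own. For 1.), the paper argues exactly as you do: \Cor{ChernEq} makes the two vanishing hypotheses equivalent, \Formel{chern} then kills all Chern classes, Yau's theorem gives K\"ahler--Einstein, and \Theo{KE} together with $c_1=0$ leaves only the torus quotients. For 2.), the paper applies \Prop{ratcurve} to both $M$ and $N$ (their non-nefness being equivalent by \Cor{ChernEq}) and then asserts, with no detail, that $n>0$ forces $m=n$ and $f$ an automorphism of projective space; your adjunction computation (a positive-dimensional fiber $F$ would be a torus with $K_F = K_M|_F$ anti-ample, impossible) is a clean justification of exactly that unexplained step, so this is a refinement rather than a different route. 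Two remarks on your final paragraph. First, flatness is a non-issue: \Prop{ratcurve} is stated for an arbitrary h.n.p.c.; only the proof reproduced in this paper is restricted to the flat case, the general case being cited from \cite{JRproj}. Second, the projectivity worry is real, but it is not resolved by your remark ``which it will be, since it turns out to be $\PN_m(\KC)$'' --- that is circular and proves nothing. Note, however, that the paper's proof makes precisely the same move you are trying to justify: this section assumes $M$ only compact K\"ahler, yet the proof invokes the cone theorem and \Prop{ratcurve}, both stated for projective manifolds, without comment. So on this point your proof is no worse than the paper's; the honest fix is simply to apply those results as stated (in effect reading part 2.) in the projective category, or appealing to their K\"ahler analogues), rather than to offer the circular sentence.
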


\begin{proof}
  1.) By \Cor{ChernEq}, if $c_1(M) = 0$ or $c_1(N)=0$, then $c_1(K_M) = c_1(K_N) =
0$. By \Formel{chern} all Chern classes of $M$ and $N$ vanish. Then
$M$ and $N$ are K\"ahler-Einstein and covered by tori by \Theo{KE}.
2.) If $K_M$ or $K_N$ are not nef, then $K_M$ and $K_N$ are not
nef by \Cor{ChernEq}, and $M \simeq \PN_m(\KC)$ and $N \simeq \PN_n(\KC)$ by \Prop{ratcurve}. As $n>0$ by assumption, $m = n$ and
$f$ is an automorphism of projective space.
\end{proof}

\begin{proof}[Proof of \Prop{fam}]
  Denote the section of $f$ by $s: N \to M$. Consider
  the pull back to $N$ by $s$ of \Formel{reltang}
   \begin{equation} \label{reltang2}
    0 \lra \Omega_N^1 \stackrel{s^*df}{\lra} s^*\Omega_M^1 \lra
    s^*\Omega^1_{M/N} \simeq E \lra 0.
   \end{equation}
  We have the map $ds: s^*\Omega_M^1 \lra \Omega^1_N$.   
 As $(ds)(s^*df) = d(f \circ s) = id_{\Omega_N^1}$, we see that \Formel{reltang2}
 splits holomorphically. 

The normalised Atiyah class of
  $s^*\Omega_M^1$ is obtained from the one of $\Omega^1_M$ by
  applying $ds$ to the last $\Omega_M^1$ factor in
  \Formel{AtProj}. What we get is 
    \begin{equation} \label{resat} 
    a(s^*\Omega_M^1) = \frac{s^*c_1(K_M)}{m+1} \otimes ds + id_{s^*\Omega_M^1} \otimes
    \frac{c_1(s^*K_M)}{m+1}
\mbox{ in } H^1(N, s^*\Omega_M^1 \otimes s^*T_M \otimes \Omega_N^1),
\end{equation} where
 we carefully distinguish between $s^*c_1(K_M) \in H^1(N,
 s^*\Omega_M^1)$ and the class $c_1(s^*K_M) = ds(c_1(K_M)) \in H^1(N, \Omega_N^1)$.

 The Atiyah class of a direct sum is the direct sum of the Atiyah
  classes (\cite{At}). As the pull back of \Formel{reltang} splits
  holomorphically, we get the Atiyah classes of $\Omega_N^1$ and $E$ by
  projecting \Formel{resat} onto the corresponding summands. 

We first compute $a(E)$. The class $c_1(K_M) \in H^1(M,
\Omega_M^1)$ is the pull back of some class in $H^1(N,
\Omega_N^1)$; it therefore vanishes under $H^1$ of $\Omega_M^1 \to
\Omega^1_{M/N}$. This means the first summand in \Formel{resat}
vanishes if we project, while the second summand becomes
  \begin{equation} \label{AtEeins}
    id_E \otimes \frac{c_1(s^*K_M)}{m+1}  \;\; \mbox{in } H^1(N, E
  \otimes E^* \otimes
    \Omega_N^1)\end{equation}
and this is $a(E)$. The trace is
  \[c_1(E) = rk(E)  \frac{c_1(s^*K_M)}{m+1} \in H^1(N,
  \Omega_N^1)\]
The determinant of
\Formel{reltang2} gives the following identities of classes in $H^1(N, \Omega_N^1)$:
  \begin{equation} \label{AdFor}
     c_1(K_N) = c_1(s^*K_M) - c_1(E) =
     \frac{m+1-(m-n)}{m+1}c_1(s^*K_M)
  \end{equation}
Now \Formel{AtE} follows from \Formel{AtEeins}.

Next we compute $a(\Omega_N^1)$. We have to apply $ds$ to the
first factor of the first summand in \Formel{resat}. This gives
$\frac{c_1(s^*K_M)}{m+1}$. As the splitting maps give the identity we get
  \[\frac{c_1(s^*K_M)}{m+1} \otimes id_{\Omega_N^1} + id_{\Omega_N^1} \otimes
    \frac{c_1(s^*K_M)}{m+1}  \in H^1(N, \Omega_N^1 \otimes T_N \otimes
    \Omega_N^1)\]
and this is $a(\Omega_N^1)$. As we just saw in \Formel{AdFor}
  \[\frac{c_1(K_N)}{n+1} = \frac{c_1(s^*K_M)}{m+1}.\]
Replacing this in the above formula we see that $N$ admits a h.n.p.c. as in \Formel{AtProj}. The Proposition is proved.
\end{proof}

\begin{proposition} \label{ballq}
  In the situation of \Prop{fam}, assume that $f$ is the Iitaka fibration of $M$. Then $N$ is a ball quotient.
\end{proposition}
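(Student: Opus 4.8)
The plan is to recognize $N$ as a Kähler--Einstein manifold with ample canonical bundle and then quote the classification \Theo{KE}. Everything hinges on showing that $K_N$ is ample. Once this is in place, Aubin and Yau's solution of the Calabi conjecture gives that $N$ is Kähler--Einstein, and since $N$ carries a h.n.p.c. by \Prop{fam}, \Theo{KE} leaves only three possibilities; the cases $\PN_n(\KC)$ (where $-K_N$ is ample) and the torus quotient (where $c_1(N)=0$) are both excluded by the ampleness of $K_N$, so $N$ is a ball quotient.

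First I would record the positivity of $K_N$. Since $f$ is the Iitaka fibration, $M\not\simeq\PN_m(\KC)$ (the latter has $\kappa=-\infty$), so $K_M$ is nef by \Prop{ratcurve}. By \Cor{ChernEq} one has $c_1(K_M)=\tfrac{m+1}{n+1}\,c_1(f^*K_N)$, whence $K_N$ is nef. Because $K_M$ is abundant, its Kodaira dimension $\kappa(M)=\dim N=n$ agrees with its numerical dimension; as the latter is proportional to that of $f^*K_N$, and numerical dimension is preserved under the surjection $f$, I get that the numerical dimension of $K_N$ equals $\dim N$, i.e. $K_N$ is nef \emph{and big}. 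In particular $N$ is Moishezon, hence, being Kähler, projective.

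Next I would upgrade ``nef and big'' to ``ample''. By the base-point-free theorem, $|dK_N|$ for $d\gg0$ defines a birational morphism $g\colon N\to N_{can}$ onto the canonical model, with $K_{N_{can}}$ ample and $g$ an isomorphism away from the $K_N$--trivial curves. On the other hand $N$ contains no rational curve at all: otherwise \Prop{ratcurve} applied to $N$ would force $N\simeq\PN_n(\KC)$, contradicting bigness of $K_N$ (alternatively, pushing a rational curve into $M$ by the section $s$ of $f$ contradicts $M\not\simeq\PN_m(\KC)$). Since every positive-dimensional fiber of a birational morphism from a smooth projective variety is covered by rational curves, $g$ can have no such fiber; hence $g$ is an isomorphism and $K_N$ is ample, completing the reduction to \Theo{KE}.

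The main obstacle is precisely this last step, the exclusion of $K_N$--trivial curves: the remaining arguments are bookkeeping with \Cor{ChernEq} and the classification \Theo{KE}, whereas ampleness genuinely requires combining the bigness produced by the Iitaka-fibration hypothesis with the rigidity ``no rational curves'' furnished by the projective connection. A Hodge-theoretic alternative is available should the contraction argument falter: a $K_N$--trivial curve $C$ would satisfy $\deg(E|_C)=\tfrac{m-n}{n+1}\,K_N\cdot C=0$ by \Formel{relK}, forcing the abelian family over $C$ to be isotrivial and so contradicting the maximality of the Iitaka dimension; but the contraction argument is shorter and avoids invoking the variation-of-Hodge-structure machinery.
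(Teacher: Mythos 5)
Your overall strategy coincides with the paper's endgame: reduce everything to showing $K_N$ ample, then use Aubin--Yau to make $N$ K\"ahler--Einstein and let \Theo{KE} together with ampleness of $K_N$ exclude $\PN_n(\KC)$ and the torus case. The problem is the step you yourself identify as the crux. The principle you invoke --- ``every positive-dimensional fiber of a birational morphism from a smooth projective variety is covered by rational curves'' --- is false as stated. Blow up the vertex of the projective cone over a smooth plane cubic $E$: the blow-up is a smooth projective surface (a $\PN_1$-bundle over $E$), the blow-down map is birational, and its fiber over the vertex is a copy of $E$, which contains no rational curve whatsoever. Uniruledness of exceptional fibers needs an extra hypothesis, e.g.\ klt singularities of the target (Hacon--McKernan) or relative numerical triviality of the canonical class (Kawamata). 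Your contraction $g: N \to N_{can}$ does satisfy such a hypothesis: $dK_N = g^*(\mbox{ample})$, so $K_N$ is $g$-numerically trivial, and the very result the paper quotes for equidimensionality of the Iitaka fibration (\cite{Kaw}, Theorem 2) produces rational curves in any positive-dimensional fiber of $g$; alternatively $N_{can}$ has canonical, hence klt, singularities. So your argument is repairable, but the justification you give for its key step is wrong. (Your Hodge-theoretic fallback is also not complete: isotriviality of the abelian family over a $K_N$-trivial curve does not visibly contradict ``maximality of the Iitaka dimension''; some Arakelov-type input and an actual contradiction would still have to be supplied.)

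Beyond the gap, you are working much too hard, and the paper's proof shows why none of this machinery (numerical dimension, base-point-free theorem, canonical models, uniruledness of fibers) is needed. Since $f$ is the Iitaka fibration, it is defined by $|dK_M|$, so $dK_M = f^*H$ for some \emph{ample} $H \in \Pic(N)$. By \Cor{ChernEq}, $c_1(f^*K_N) = \frac{n+1}{d(m+1)}\,c_1(f^*H)$. Now restrict to the smooth section $s$ of $f$, which is part of the situation of \Prop{fam}: since $f \circ s = \id_N$, pulling back by $s$ gives $c_1(K_N) = \frac{n+1}{d(m+1)}\,c_1(H)$ in $H^1(N,\Omega^1_N)$, and a line bundle numerically proportional with positive coefficient to an ample one is ample. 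This is the entire content of the hypothesis ``$f$ is the Iitaka fibration'': it hands you the ample bundle $H$ upstairs, and the section converts it into ampleness of $K_N$ in one line, with no need to first establish bigness or to exclude $K_N$-trivial curves at all.
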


\begin{proof}
If $f$ is the Iitaka fibration defined by $|dK_M|$ for some $d \in \KN$, then $dK_M = f^*H$ for some ample $H \in \Pic(N)$. By \Cor{ChernEq} $c_1(f^*K_N) = \frac{n+1}{d(m+1)}c_1(f^*H)$. The restriction to the section shows that $K_N$ is ample. As $N$ admits a h.n.p.c., $N$ is a ball quotient by \Theo{KE}.
\end{proof}

\begin{example} The above ideas also apply to the following situation: let $f: M \lra N$ be a vector bundle over some manifold $N$. Then $\Omega^1_{M/N} \simeq f^*M^*$. Assume that the total space $M$ carries a h.n.p.c.. As in \Prop{fam}
\[a\big(M^*(-\frac{K_N}{n+1})\big) = 0 \quad
      \mbox{in } H^1(N, \End(M^*) \otimes
      \Omega_N^1).\]
For example let $M \subset \PN_m(\KC)$ be the complement of a linear subspace $\simeq \PN_n(\KC)$ of $\PN_m(\KC)$. $M$ inherits a projective structure from $\PN_m(\KC)$. Projection to $\PN_n(\KC)$ shows  
  \begin{equation}\label{UVec}
    M \simeq \sO_{\PN_n}(1)^{\oplus m-n},
  \end{equation}
i.e., $M$ carries the structure of a vector bundle over $\PN_n(\KC)$. Here $M^*(-\frac{K_N}{n+1}) \simeq \sO_{\PN_n}^{\oplus m-n}$ and the Atiyah class of this bundle vanishes. 
\end{example}

\setcounter{equation}{0}

\section{Abelian schemes} \label{klassalles}
Let $f: M_m \lra N_n$ be an abelian scheme, $E = E^{1,0} = f_*\Omega_{M/N}^1$ as in the preceeding section. In the case where $N$ is a compact Riemann surface of genus $g > 0$ the Arakelov inequality says
  \[2\deg E \le (m-1)\deg(K_N) = (m-1)(2g_N-2).\]
From \Cor{ChernEq} and results of Viehweg and Zuo we obtain:

\begin{proposition} \label{VZklass}
  Let $f: M \lra N$ be an abelian scheme. Assume that $M$ admits a h.n.p.c. and that $N$ is of general type, $0 < \dim N < \dim M$. Then $N$ is a compact Riemann surface and
   \[2\deg E = (m-1) \deg(K_N).\]
\end{proposition}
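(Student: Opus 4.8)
The plan is to first use the inherited projective connection to identify $N$ as precisely as possible, and then to play the resulting numerical data against the higher-dimensional Arakelov inequality.

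First I would pin down $N$. Since $N$ is of general type it is neither $\PN_n(\KC)$ nor an \'etale quotient of a torus. By \Prop{fam}, $N$ carries a h.n.p.c., so \Prop{ratcurve} applies: as $N \not\simeq \PN_n(\KC)$, the base $N$ contains no rational curve and $K_N$ is nef; in particular $N$ is strongly minimal. Being of general type, $K_N$ is moreover big. A big and nef canonical bundle is semiample, and the induced birational morphism onto the canonical model can contract only rational curves by the cone theorem; since there are none, it is an isomorphism and $K_N$ is ample. By Aubin--Yau, $N$ is then K\"ahler--Einstein, and \Theo{KE} identifies $N$ as a ball quotient. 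For the computation below the essential consequence is that $K_N$ is ample, so it may be used as a polarization.

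Next I would compute slopes with respect to $K_N$, writing $\mu(F) = (c_1(F)\cdot K_N^{n-1})/\rk F$. From \Formel{relK} (equivalently \Cor{ChernEq}) we have $c_1(E) = \tfrac{m-n}{n+1}c_1(K_N)$, while $\rk E = m-n$ since the fibres are abelian of dimension $m-n$. Hence
\[
  \mu(E) = \frac{K_N^n}{n+1}, \qquad \mu(\Omega_N^1) = \frac{K_N^n}{n}.
\]
The abelian scheme $f$ is a smooth projective family over the compact base $N$, and therefore carries a polarized weight-one variation of Hodge structure with Hodge bundles $E^{1,0} = E$ and $E^{0,1} = R^1f_*\O_M \simeq E^*$; the polarization gives $\mu(E^{0,1}) = -\mu(E)$. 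Feeding this into the higher-dimensional Arakelov inequality of Viehweg--Zuo (\cite{VZ}, \cite{Fa}), which in slope form reads
\[
  \mu(E^{1,0}) - \mu(E^{0,1}) \le \mu(\Omega_N^1),
\]
the substitution yields $\tfrac{2}{n+1}K_N^n \le \tfrac{1}{n}K_N^n$. Since $K_N^n > 0$, this is $2n \le n+1$, i.e.\ $n \le 1$; as $\dim N > 0$ we conclude $n = 1$. At $n = 1$ the inequality above is already an equality, which is exactly $2\deg E = (m-1)\deg K_N$.

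The step I expect to be the main obstacle is invoking the Arakelov inequality in precisely the right normalization and confirming that it applies in our setting: one must check that the weight-one variation is genuine (non-isotrivial), which follows because $\deg E = \tfrac{m-n}{n+1}\deg K_N > 0$ as $K_N$ is ample and $m > n$, and that the polarization indeed identifies $E^{0,1}$ with $E^*$ so that $\mu(E^{0,1}) = -\mu(E)$. Everything else is a direct substitution of the Chern class identity from \Cor{ChernEq} into the inequality.
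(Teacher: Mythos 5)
Your proof is correct and is essentially the paper's own argument: both feed the identity $c_1(E)=\tfrac{m-n}{n+1}c_1(K_N)$ of \Cor{ChernEq} into the Viehweg--Zuo Arakelov inequality in slope form, $2\mu_{K_N}(E)=\mu_{K_N}(E^{1,0})-\mu_{K_N}(E^{0,1})\le\mu_{K_N}(\Omega_N^1)$, obtaining $\tfrac{2}{n+1}K_N^n\le\tfrac{1}{n}K_N^n$, hence $n=1$, with the Chern class identity then forcing equality. Two small corrections: the inequality over a base of dimension $n>1$ is \cite{VZ2}, Theorem 1 and Remark 2 (not \cite{VZ} or \cite{Fa}, which treat one--dimensional bases), and since that theorem needs only $K_N$ nef (available via \Prop{fam} and \Prop{ratcurve}) and big, your preliminary reduction to $K_N$ ample can simply be dropped --- which is just as well, because its justification via the cone theorem is inexact: curves contracted by the canonical model are $K_N$--trivial rather than $K_N$--negative, so one would need \cite{Kaw} there instead.
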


\begin{proof}
   Let ${\mathbb V} := R^1f_*\KC$. By \cite{VZ2}, Theorem~1 and Remark~2 we have the inequality of slopes $\mu_{K_N}({\mathbb V}) = 2\mu_{K_N}(E) \le \mu_{K_N}(\Omega_N^1)$, where for any torsion free sheave ${\mathcal F}$ of positive rank and ample $H\in Pic(N)$ one defines as usual $\mu_H({\mathcal F}) = \frac{c_1({\mathcal F}).H^{n-1}}{{\rm rk}{\mathcal F}}$. By \Cor{ChernEq}
  \[2\mu_{K_N}(E) = \frac{2}{n+1}c_1(K_N)^n \le \mu_{K_N}(\Omega_N^1) = \frac{1}{n}c_1(K_N)^n.\]
Since $c_1(K_N)^n > 0$ we find $n = 1$. Then $N$ is Riemann surface and we have in fact equality $2\mu_{K_N}(E) = \mu_{K_N}(\Omega_N^1)$, i.e., the Arakelov inequality is an equality.
\end{proof}

\begin{rem} \label{VHS}
Let $f: M \lra N$ be as above, $N$ a compact Riemann surface. Think of $N$ as a quotient of $\Sieg_1$ and of $\pi_1(N)$ as a subgroup of $PSl_2(\KR)$.

As an intermediate result Viehweg and Zuo show  that there exists a lift of $\pi_1(N)$ to a subgroup $\Gamma \subset Sl_2(\KR)$, s.t. $R^1f_*\KC$ comes from the tensor product of the canonical representation of $\Gamma$ and some unitary representation of $\Gamma$ (\cite{VZ}, 1.4. Proposition, \S 2). The fact that $\Gamma$ comes from a quaternion algebra is then a consequence of a result of Takeuchi (\cite{Ta}).

The tensor description of $R^1f_*\KC$ will play a role in the next section. Note that it implies, up to {\'e}tale base change, $E \simeq U \otimes \theta$, where $U$ is a unitary flat bundle and $\theta$ is some theta characteristic on $N$ (compare remark~\ref{remat}).
\end{rem}

\begin{proof}[proof of \Theo{main}]
  Let $M$ be a projective manifold with a h.n.p.c. $\Pi$. which is not K\"ahler Einstein. Then $M \not\simeq \PN_m(\KC)$ and $K_M$ is nef by \Prop{ratcurve}.

Assume $K_M$ abundant and $\Pi$ flat. By \Theo{AbSc} we may assume that the Iitaka fibration $f: M \lra N$ is a smooth abelian group scheme (after some finite {\'e}tale covering). By \Prop{ballq} $N$ is a ball quotient. Since $M$ is neither a torus nor a ball quotient, $0 < \dim N < \dim M$. By \Prop{VZklass}, $N$ is a compact Riemann surface. The equivalence of 1.) and 2.) in \Theo{main} for such abelian group schemes is \cite{VZ}, Theorem 0.5.

It remains to prove that the examples as in 2.) indeed admit a flat projective connection. This will be done in the next section. 
\end{proof}

\begin{rem}
  Let $f: M_m \lra C$ be a an abelian group scheme as in \Theo{main}, 1.), .i.e.,  $2\deg E = (m-1)\deg(K_C) = (m-1)(2g_C-2)$. In the push forward of
  \[0 \lra f^*K_C \lra \Omega_M^1 \lra \Omega_{M/C}^1 \lra 0\]
we have $f_* \Omega_{M/C}^1 \simeq K_C \otimes R^1f_*\sO_M$, implying $H^0(M, f^*K_C) \simeq H^0(M, \Omega_M^1)$. Then $Alb(M) \sim Jac(C)$. The Iitaka fibration essentially coincides with the fibration induced from the albanese map. This might be an idea of how to prove the abundance conjecture in the present case.
\end{rem}

\section{Projective non--K\"ahler--Einstein examples} \label{Ex} \setcounter{equation}{0}
The explicit examples are well known families of abelian varieties (\cite{Shim}, \cite{Mum}, \cite{VZ}, \cite{vG}). They have been studied from many points of view but apparently not as examples of manifolds carrying a flat projective structure.

As in \Theo{main}, let $A$ be a division quaternion algebra defined over some totally real number field $F$ of degree $[F:\KQ] = d$. Assume that $A$ splits at exactly one infinite place, i.e.,
  \begin{equation} \label{decom}
     A \otimes_{\KQ} \KR \simeq M_2(\KR) \oplus \OH \oplus \cdots \oplus \OH.
  \end{equation}
The existence of $A$ follows from Hilbert's reciprocity law.
Let $Cor_{F/\KQ}(A)$ be the rational corestriction of $A$. Then
  \[Cor_{F/\KQ}(A) = M_{2^{d-1}}(B),\]
where $B$ is a quaternionen algebra over $\KQ$ (possibly split). From this data we construct in section~\ref{Alg}
  \begin{enumerate}
   \item a torsion free discrete subgroup $\Gamma$ of $Sl_2(\KR)$ acting canonically on $U_{\KR} = \KR^2$ and on $\Sieg_1$ as a Fuchsian group of first kind,
   \item an orthogonal representation $\rho: \Gamma \lra O(g)$ on $W_{\KR} \simeq \KR^g$ (where $g = 2^{d-1}$ in the case $B$ split and $g = 2^d$ in the case $B$ non split), s.t.
   \item the symplectic representation $id \otimes \rho$ of $\Gamma$ fixes some complete lattice in $U_{\KR} \otimes W_{\KR}$.
  \end{enumerate}
See also remark~\ref{VHS}. We will first explain how the above data leads to an abelian group scheme $Z_\Gamma \lra C_\Gamma = \Sieg_1/\Gamma$ with a projective structure. Here $Z_\Gamma$ will be compact if and only if $C_\Gamma$ is which is the case if the above data is indeed derived from a division quaternion algebra.

The following particular example is not one we are interested in since it is not compact. It is however well known and gives the general idea:

\begin{example}\label{ellcurv} Let $\Gamma \subset Sl_2(\KZ)$ be some standard torsion free congruence subgroup. Consider the quotient $Z$ of $\KC \times \Sieg_1$ by the action
  \[(z, \tau) \mapsto \left(\frac{z+m\tau+n}{c\tau+d}, \frac{a\tau+b}{c\tau+d}\right), \quad \left(\begin{array}{cc}
                      a & b \\
                      c & d
                     \end{array}\right) \in \Gamma, \;\; m,n \in \KZ.\]
Then $Z$ is a smooth manifold with a map $Z \lra C = \Sieg_1/\Gamma$, a modular family of elliptic curve. By example~\ref{Exmpl}, 2.), $Z$ has a projective structure.
\end{example}

\subsection{Construction of the abelian scheme $Z \lra C$}
\subsubsection{Choice of bases.} Point 1.) includes a choice of a basis of $U_{\KR} \simeq \KR^2$. Choose any basis of $W_{\KR}$. Write the elements of $U_{\KR}$ and $W_{\KR}$ as horizontal vectors. The action of $\Gamma$ on $U_{\KR}$ is by left multiplication. The standard symplectic form on $U_{\KR}$
\[\langle u, u'\rangle = u^tJ_2u', \quad J_2 := \left(\begin{array}{cc}
                                           0 & 1 \\
                                          -1 & 0
                                         \end{array}\right)\]
identifies $Sl_2(\KR) = Sp_2(\KR)$ and $U_{\KR} \simeq U^*_{\KR}$ as $\Gamma$--modules. Write the elements of $U^*_{\KR}$ with dual base as horizontal vectors. The action of $\Gamma$ is then given by right multiplication $\gamma(u) = u\gamma^{-1}$, the $\Gamma$--isomorphism $U_{\KR} \simeq U_{\KR}^*$ by $u \mapsto u^tJ_2$.

The choice of a basis for $U_{\KR}$ and $W_{\KR}$ gives an isomorphism
  \[W_\KR \otimes U_{\KR}^* \simeq M_{g \times 2}(\KR).\]
Fix this isomorphism and think of the elements of $W_\KR \otimes U_{\KR}^*$ as real $g \times 2$ matrices from now on. Any $\gamma \in \Gamma$ acts by
   \[\gamma(\alpha) = \rho(\gamma)\alpha\gamma^{-1}, \quad \alpha \in M_{g\times 2}(\KR).\]
Since $W_\KR \otimes U_{\KR}^* \simeq W_\KR \otimes U_{\KR}$ as $\Gamma$--modules, we find a complete lattice $\Lambda \subset M_{g \times 2}(\KR)$ invariant under the action of $\Gamma$ by 3.). By 3.) we find a $\rho(\Gamma)$ invariant symmetric and positiv definit $S \in M_g(\KR)$ s.t. the symplectic form on $M_{g\times 2}(\KR)$ given by
  \begin{equation} \label{E} 
     E(\alpha, \beta) := tr(\alpha^tS\beta J_2)
  \end{equation} 
takes only integral values on $\Lambda$. For later considerations note that $M_{g\times 2}(\KR) \simeq W_\KR \otimes U_{\KR}^*$ is a symplectic $O(S) \times Sl_2(\KR)$ module via $(\delta, \gamma)(\alpha) = \delta\alpha\gamma^{-1}$.

\subsubsection{Definition of $Z$.} Let $\Gamma_{\Lambda}$ be set of matrices
 \[\gamma_{\lambda} := \left(\begin{array}{cc}
      \rho_2(\gamma) & \rho(\gamma)\lambda \\
        0_{2 \times 2} & \gamma
    \end{array}\right) \in Sl_{g+2}(\KR), \quad \gamma \in \Gamma, \lambda \in \Lambda.\]
Then $\Gamma_{\Lambda} \simeq \Lambda \rtimes \Gamma$ is a subgroup of $Sl_{g+2}(\KR)$ and there is an exact sequence
   \[0 \lra \Lambda \lra \Gamma_{\Lambda} \lra \Gamma \lra 1,\]
given by $\lambda \mapsto id_{\lambda}$ and $\gamma_{\lambda} \mapsto \gamma$, respectively.  The projective action on $\KC^g \times \Sieg_1$, i.e., where $\gamma_{\lambda}\in \Gamma_{\Lambda}$ acts by
  \[(z, \tau) \mapsto \left(\frac{\rho_2(\gamma)(z + \lambda {\tau \choose 1})}{c\tau + d}, \frac{a\tau + b}{c\tau + d}\right), \quad \gamma = \left(\begin{array}{cc}
                           a & b \\
                           c & d
                       \end{array}\right),\]
is properly discontinously and free, since the action of $\Gamma$ is. The quotient is a smooth complex manifold 
  \[Z = Z_{\Gamma, \Lambda} := \KC^g \times \Sieg_1/\Gamma_\Lambda.\]
By example~\ref{Exmpl}, 2.), $Z$ has a projective structure. There is a natural holomorphic proper submersion
  \[f: Z \lra C_{\Gamma} := \Sieg_1/\Gamma\]
with a section given by $[\tau] \mapsto [(0,\tau)]$. The fiber $Z_{\tau} = f^{-1}([\tau])$ is isomorphic to $\KC^g$ divided by the corresponding stabilizer subgroup of $\Gamma_{\Lambda}$. Since the action of $\Gamma$ on $\Sieg_1$ is free, 
  \[Z_{\tau} \simeq \KC^g/\Lambda_{\tau},\]
where $\Lambda_{\tau}$ is the image of $\Lambda$ under
  \begin{equation} \label{ComplStr}
    M_{g\times 2}(\KR) \simeq W_{\KR} \times U_{\KR}^* \lra \KC^g, \quad \alpha \mapsto \alpha_{\tau} := \alpha\left(\begin{array}{c}\tau \\ 1 \end{array}\right).
  \end{equation}
\subsubsection{Projectivity of $Z_{\tau}$.} \Formel{ComplStr} endows $M_{g\times 2}(\KR) \simeq W_\KR \otimes U_{\KR}^*$ with the complex structure given by
  \[J_{\tau} = \frac{1}{\Im m \tau}\left(\begin{array}{cc}
                           -\Re e \tau & \tau\bar{\tau} \\
                            -1 & \Re e(\tau)
                        \end{array}\right) \in Sl_2(\KR), \;\;
\mbox{ i.e., }\,\, i \cdot \alpha_{\tau} = (\alpha J_{\tau}^{-1})_{\tau}.\]
If $\tau' = \gamma(\tau)$ for some $\gamma \in Sl_2(\KR)$, then $J_{\tau'} = \gamma J_{\tau}\gamma^{-1}$. Hence $Z_\tau$ is projective if $(M_{g\times 2}(\KR) \simeq W_{\KR} \otimes U_{\KR}^*, \Lambda, J_{\tau}, E)$ satisfies the Riemann conditions:

Recall that $M_{g\times 2}(\KR) \simeq W_{\KR} \otimes U_{\KR}^*$ is a symplectic $O(S) \times Sl_2(\KR)$--module. The complex structure is given by $(id, J_\tau) \in O(S) \times Sl_2(\KR)$. Therefore $E(\alpha J_{\tau}^{-1}, \alpha'J_{\tau}^{-1}) = E(\alpha, \alpha')$ for any $\tau \in \Sieg_1$. Positivity for $\tau = i$ follows from $J_i = J_2$ and $E(\alpha, \alpha J_2^{-1}) = tr(\alpha^tS\alpha) > 0$ for $\alpha \not= 0$. Any $\tau \in \Sieg_1$ is of the form $\tau = \gamma(i)$ for some $\gamma \in Sl_2(\KR)$. The invariance of $E$ and $J_{\tau} = \gamma J_{i}\gamma^{-1}$ shows that $E(\alpha, \alpha' J_{\tau}^{-1})$ is a positive definite symmetric. Therefore, $Z_{\tau}$ is projective.

\subsubsection{Isomorphic $Z_{\tau}$'s.} If $\tau' = \gamma(\tau)$ for some $\gamma \in \Gamma$, then $(Z_{\tau}, E) \simeq (Z_{\tau'}, E)$, where $\varphi: Z_{\tau} \lra Z_{\tau'}$, as a map $\KC^g \lra \KC^g$, is given by
  \begin{equation} \label{analyt}
   \frac{1}{c\tau + d}\rho(\gamma)
  \end{equation}
Indeed,
  \[\Lambda_{\tau'} = \Lambda\left(\begin{array}{c}\gamma(\tau) \\ 1 \end{array}\right) = \frac{1}{c\tau+d}(\Lambda\gamma)_\tau = \frac{\rho(\gamma)}{c\tau+d}(\rho(\gamma^{-1})\Lambda\gamma)_\tau = \frac{\rho(\gamma)}{c\tau+d}\Lambda_{\tau}.\]
On the underlying real vector space $M_{g\times 2}(\KR) \simeq W_\KR \otimes U_\KR$, $\varphi$ is given by $\alpha \mapsto \rho(\gamma)\alpha\gamma^{-1}$ showing that $\varphi$ respects the polarization $E$.

The family of $Z_{\tau}'s$ over $\Sieg_1$, i.e., the quotient $\KC^g \times \Sieg_1/\Lambda$, is projective. We obtain $Z$ by dividing out the action of $\Gamma_{\Lambda}/\Lambda \simeq \Gamma$. Fiberwise this is nothing but \Formel{analyt} proving that the polarizations glue to a section of $R^2f_*\KZ$.

\subsubsection{Map to the moduli space.} This follows from the above considerations, we include it here for convenience of the reader. General reference is \cite{BiLa}:

\subsubsection{Moduli of type $\Delta$ ploarized abelian varieties} Let $\Sieg_g = \{\Pi \in M_g(\KC) | \Pi = \Pi^t, \Im m \Pi > 0\}$. Let $\Delta$ be a {\em type}, i.e., an invertible $g \times g$ diagonal matrix $(\delta_1, \dots, \delta_g)$, $\delta_i \in \KN$ and $\delta_i | \delta_{i+1}$. Fix a basis of $\KR^{2g}$, write the elements as horizontal vectors in the form $(x,y)$, $x, y \in \KR^g$. Choose the letters $(m, n)$ for integral entries. Let
  \[\Lambda_{\Delta} := \{(m, n\Delta) | m, n \in \KZ^g\} \subset \KR^{2g}\]
Any $\Pi \in \Sieg_g$ defines a complex structure on $\KR^{2g}$ via $j_{\Pi}: (x,y) \mapsto x\Pi + y$. As above we have the abelian variety $\KC^g/j_{\Pi}(\Lambda_\Delta)$, where we write the elements of $\KC^g$ as horizontal vectors. Let 
  \[\tilde{\Gamma} = \{\gamma \in Sp_{2g}(\KQ) | \Lambda_{\Delta}\gamma = \Lambda_{\Delta}\}.\]
Then $\tilde{\Gamma}$ acts on $\Sieg_g$ in the usual way. Let $\tilde{\Gamma}_{\Delta}$ be the group consisiting of all maps
  \[\gamma_{\lambda_\Delta} = \left(\begin{array}{ccc}
                              1 & m & n\Delta \\
                              0 & A & B \\
                              0 & C & D
                           \end{array}\right), \;\;  \gamma = \left(\begin{array}{cc}
                              A & B \\
                              C & D
                           \end{array}\right) \in \tilde{\Gamma}, \lambda_\Delta = (m, n\Delta) \in \Lambda_\Delta.\]
As above we have $0 \lra \Lambda_{\Delta} \lra \tilde{\Gamma}_{\Lambda_\Delta} \lra \tilde{\Gamma} \lra 1$ and $\tilde{\Gamma}_{\Lambda_\Delta} \simeq \Lambda_{\Delta} \rtimes \tilde{\Gamma}$. The group $\tilde{\Gamma}_{\Lambda_\Delta}$ acts on $\KC^g \times \Sieg_g$ in 'Grassmanian manner', i.e.,
 \[(z, \Pi) \mapsto \left((z + m\Pi + n\Delta)(C\Pi + D)^{-1}, (A\Pi + B)(C\Pi + D)^{-1}\right).\]
The quotient need not be a manifold since the action of $\tilde{\Gamma}$ need not be free. After replacing $\tilde{\Gamma}$ by some appropriate congruence subgroup, the quotient yields a smooth abelian fibration $U_g \lra A_g$. A relatively ample line bundle is for example induced by the factor of automorphy
  \[a(\gamma_{\lambda_{\Delta}}, (z, \Pi)) = \exp(2i\pi(m\Pi m^t + 2zm^t + (z+m\Pi+n\Delta)(C\Pi + D)^{-1}C(z+m\Pi + n\Delta)^t).\]

\subsubsection{ An equivariant holomorphic map $\KC^g \times \Sieg_1 \lra \KC^g \times \Sieg_g$} is defined as follows: let $\lambda_1, \dots, \lambda_g, \mu_1, \dots, \mu_g \in M_{g \times 2}(\KR)$ be a symplectic lattice basis. It means that these elements generate $\Lambda$ as a $\KZ$--module and that $E$ from \Formel{E} is in this basis given by
  \[\left(\begin{array}{cc}
            0 & \Delta \\
           -\Delta & 0
          \end{array}\right) \in M_{2g}(\KZ),\]
where $\Delta$ is a certain type. Fix the basis $\lambda_1, \dots, \lambda_g, \mu'_1 = \frac{1}{\delta_1}\mu_1, \dots, \mu'_g = \frac{1}{\delta_g}\mu_g$ of $M_{g \times 2}(\KR)$ in which $E$ is given in standard form. The choice defines a map $\kappa: M_{g\times 2}(\KR) \lra \KR^{2g}$. Write the elements of $\KR^{2g}$ again as horizontal vectors. Then $\kappa(\Lambda) = \Lambda_{\Delta}$.

For any $\gamma \in \Gamma$, $\alpha \mapsto \rho(\gamma^{-1})\alpha\gamma$ is a symplectic automorphism of $M_{g\times 2}(\KR)$. We find a matrix $\sigma(\gamma) \in Sp_{2g}(\KQ)$ such that $\kappa(\rho(\gamma^{-1})\alpha\gamma) = \kappa(\alpha)\sigma(\gamma)$. The map $\sigma$ is a group morphism $\Gamma \lra \tilde{\Gamma}$. It extends to 
  \begin{equation} \label{grpm} 
      \Gamma_{\Lambda} \lra \tilde{\Gamma}_{\Lambda_{\Delta}}, \quad \gamma_{\lambda} \mapsto \sigma(\gamma)_{\kappa(\lambda)}.
  \end{equation}
The map $\KC^g \times \Sieg_1 \lra \KC^g \times \Sieg_g$ is finally defined as follows: For any $\tau \in \Sieg_1$, using \Formel{ComplStr}, let
  \[\Pi_1(\tau) = (\lambda_{1, \tau}, \dots, \lambda_{g, \tau}) \in M_{g}(\KC),\]
\[\Pi'_2(\tau) = (\mu'_{1, \tau}, \dots, \mu'_{g, \tau}) \in M_{g}(\KC).\]
Then $(\Pi_1(\tau), \Pi'_2(\tau)\Delta)$ is the period matrix of the abelian variety $Z_{\tau}$ with respect to the chosen bases. By \Formel{analyt} we have $(\Pi_1(\gamma(\tau)), \Pi'_2(\gamma(\tau)) = \frac{\rho(\gamma)}{c\tau+d}(\Pi_1(\tau), \Pi'_2(\tau))\sigma(\gamma)^t$. The matrix $\Pi'_2(\tau)$ is invertible by and the Riemann bilinear relations in this form read
  \[\Pi(\tau) := [\Pi'_2(\tau)]^{-1}\Pi_1(\tau) \in \Sieg_g.\]
Note that $(\Pi(\tau), \Delta)$ is the period matrix of $Z_\tau$ with respect to the symplectic lattice basis and the basis of $\KC^g$ induced by $\Pi'_2(\tau)$. The map $\KC^g \times \Sieg_1 \lra \KC^g \times \Sieg_g$, $(z, \tau) \mapsto (\Pi'_2(\tau)^{-1}z, \Pi(\tau))$ is the desired holomorphic map, equivariant with respect to \Formel{grpm}.

\subsection{From $A$ to $Z$} \label{Alg} It remains to show how a quaternion algebra $A$ as above leads to a collection of data 1) --- 3). We first recall some results

\subsubsection{On central simple algebras.} Let $A$ a central simple algebra of finite dimension over a field $K$. It is called {\em division} if it is a skew field. It is called a {\em quaternion algebra} if $[A:K] = 4$. A quaternion algebra is either division or {\em split}, i.e., $A \simeq M_2(K)$. Let $Br(K)$ be the Brauer group of $K$. The order $e(A)$ of $[A] \in Br(K)$ is finite and is called the {\em exponent} of $A$. A theorem of Wedderburn says $A \simeq M_r(D)$, where $D/K$ is a division algebra. The $K$--dimension $[D:K] = s(D)^2$ for some $s(D)=s(A) \in \KN$ called {\em (Schur-) index} of $A$. One has $e(A) | s(A)$ and if $K$ is a local or global field, then even $e(A) = s(A)$.

The corestriction $Cor_{F/\KQ}(A)$ is a $4^d$ dimensional central simple $\KQ$--algebra. The corestriction induces a map of Brauer groups
  \[Br(F) \lra Br(\KQ).\]
Since $e(A)=s(A) = 2$ we have $e(Cor_{F/\KQ}(A)) = 1$ or $2$. Hence
  \begin{equation} \label{Bdef}
    Cor_{F/\KQ}(A) \simeq M_{2^d}(\KQ) \quad \mbox{or} \quad Cor_{F/\KQ}(A) \simeq M_{2^{d-1}}(B),
  \end{equation}
where $B$ is a division quaternion algebra over $\KQ$. We refer to the first case as ``$B$ splits' because here $Cor_{F/\KQ}(A) \simeq M_{2^{d-1}}(B)$ for $B = M_2(\KQ)$. Consider the following $\KQ$--vector spaces 
 \[V_{\KQ} = \KQ^{2^d}, \mbox{ in the case $B$ split}, \; V_{\KQ} = B^{2^{d-1}}, \mbox{ in the case $B$ non--split.}\]
The $\KQ$--dimension is $2^d$ and $2^{d+1}$, respectively. The elements of $V_{\KQ}$ are cosidered as vertical vectors and $V_{\KQ}$ as a $Cor_{F/\KQ}(A)$--module. 

From $A \otimes_{\KQ} \KR \simeq M_2(\KR) \oplus \OH^{\oplus d-1}$ using $\OH \otimes_{\KR} \OH \simeq M_4(\KR)$ we obtain
 \begin{equation} \label{CorR} Cor_{F/\KQ}(A) \otimes_{\KQ} \KR \simeq M_2(\KR) \otimes_{\KR} \OH^{\otimes_{\KR} d-1} \simeq
 \left\{\begin{array}{ll}
         M_{2^d}(\KR), & d \mbox{ odd} \\
         M_{2^{d-1}}(\OH), & d \mbox{ even.}
      \end{array}\right.
 \end{equation}
Then $B$ is indefinit (i.e., $B \otimes_{\KR} \KR \simeq M_2(\KR)$) if and only if $d$ is odd, $B$ is definit (i.e., $B \otimes_{\KR} \KR \simeq \OH)$) if and only if $d$ is even and
\begin{equation} \label{VR} V_{\KR} := V_{\KQ} \otimes_{\KQ} \KR \simeq 
 \left\{\begin{array}{ll}
           \KR^{2^d}, & d \mbox{ odd and $B$ split} \\
           M_{2^d \times 2}(\KR), & d \mbox{ odd and $B$ non split} \\
           \OH^{2^{d-1}} \simeq \KC^{2^{d}}, & d \mbox{ even}
      \end{array}\right.
 \end{equation}
with the obvious action of $Cor_{F/\KQ}(A) \otimes_{\KQ} \KR$ from \Formel{CorR}. Note that $V_{\KR}$ is an irreducible  $(Cor_{F/\KQ}(A)  \otimes_{\KQ} \KR)^\times$ module in the first and third case, while it is a direct sum of two isomorphic modules in the second.

\

The corestriction comes with a map
  \[Nm: A^{\times} \lra Cor_{F/\KQ}(A)^{\times}.\]
Let $x \mapsto x'$ be the canonical involution of $A$ and
  \[G = \{x \in A | xx' = 1\}.\]
Then $G$ is an algebraic group over $\KQ$. Via $Nm$, $G(\KQ)$ acts on $V_{\KQ}$. Let $\Lambda \subset V_{\KQ}$ be some complete lattice, $\Gamma \subset G(\KQ)$ be some torsion free arithmetic subgroup fixing $\Lambda$ via $Nm$.

Consider the action over $\KR$: The elements of norm $1$ in $M_2(\KR)$ and $\OH$ form the groups $Sl_2(\KR)$ and $SU(2)$, respectively. Then $A \otimes_{\KQ} \KR \simeq M_2(\KR) \oplus \OH^{\oplus d-1}$ shows
  \[G(\KR) = Sl_2(\KR) \times \underbrace{SU(2) \times \cdots \times SU(2)}_{d-1}\]
The isomorphism $\OH \otimes_{\KR} \OH \simeq M_4(\KR)$ induces $SU(2) \times SU(2) \lra SO(4)$. Then $Nm$ factors over
 \begin{equation} G(\KR) \lra \left\{\begin{array}{llc}
         Sl_2(\KR) \times SO(2^{d-1}) , & & \mbox{$B$ indefinit / $d$ odd} \\
         Sl_2(\KR) \times SU(2^{d-1}), &  & \mbox{$B$ definit / $d$ even}.
      \end{array}\right.
 \end{equation}
Projection onto the first factor embeds $\Gamma$ into $Sl_2(\KR)$. Projection onto the second factor induces an orthogonal representation $\rho'$ of $\Gamma$ (in the case $d$ even take the real part of the unitary form). In the case $B$ split or definit put $\rho = \rho'$, in the case $B$ non--split indefinite put $\rho = \rho' \oplus \rho'$ and , the direct sum of. Then $Nm = id \otimes \rho$ over the reals fixing, by construction, a complete latice $\Lambda$. This gives 1) -- 3).

\begin{rem}
1.) The group $G$ can be identified with the special Mumford Tate group of the constructed family of abelian varieties. The ring $End_{\KQ}(Z_{\tau})$ for $\tau$ general is isomorphic to the group of endomorphisms of $V$ commuting with the action of $G$. 

In the case $B$ non split, $V_{\KQ}$ becomes a $B$--module via $\beta v := v\beta'$ and this action clearly commutes. This gives an embedding $B \hookrightarrow End_{\KQ}(Z_{\tau})$. Together with results from (\cite{VZ}, where $B$ is not explicitely mentioned) we find $End_{\KQ}(Z_{\tau}) \simeq B$ in the case $B$ non--split and $End_{\KQ}(Z_{\tau}) \simeq \KQ$ in the case $B$ split.

2.) Assume 1.)--3.) as above are given corresponding to the family $Z \lra C$. In 3.) we may replace $\rho$ by a direct sum of, say, $r$ copies of $\rho$. This again gives 1.)--3.). corresponding to the $r$--fold product $Z \times_C \cdots \times_C Z$. 
\end{rem}

Let us illustrate the construction with one more explicit example.

\begin{example}(False elliptic Curves) \label{FEK}
(\cite{Shim}) This is the case $d=1$, $F = \KQ$, $A = B$ indefinit. Denote quaternion conjugation in $B$ by $\alpha \mapsto \alpha'$. Choose some pure quaternion $y$ (i.e., $y = -y'$) such that $b := y^2 < 0$. There exists $x \in B$ such that $xy=-yx$ and $a := x^2 > 0$. Then $B$ is generated by $x$ and $y$ as an algebra over $\KQ$ and
  \[x \mapsto \left(\begin{array}{cc}
                              \sqrt{a} & 0 \\
                                 0 & -\sqrt{a}
                            \end{array}\right), \quad y \mapsto \left(\begin{array}{cc}
 0 & b \\
 1 & 0
\end{array}\right)\]
gives an emedding $B \hookrightarrow M_{2\times 2}(\KR)$.  Identify $B$ with its image in $M_{2\times 2}(\KR)$, s.t. reduced norm and trace are then given by usual matrix determinant and trace, respectively.

Let $\Lambda$ be some complete lattice in $V_\KQ := B$. For the construction we may take a maximal order for $\Lambda$. Let $\Gamma$ be a torsion free subgroup of the unit subgroup $\Lambda^\times_1 = \{\alpha \in \Lambda | \alpha\alpha'=1\}$. The matrix $S := J_2y$ is symmetric and positive definite. Let $\rho: \Gamma \lra O(\KR^2, S)$ be the trivial representation. Then all of the above points are satisfied. 

Indeed, $\Gamma$ acts on $V_\KR = M_{2}(\KR)$ via $\alpha \mapsto \rho(\gamma)\alpha\gamma^{-1} = \alpha\gamma^{-1}$, fixing $\Lambda$. The form $E(\alpha, \beta) = tr(\alpha^tS\beta J_2)$ from \Formel{E} is $\Gamma$--stable. The extension of quaternion conjugation from $B$ to $V_\KR = M_{2}(\KR)$ is given by $\alpha \mapsto J_2^{-1}\alpha^tJ_2$. Then (recall $tr(\alpha\beta) = tr(\beta\alpha)$)
  \[E(\alpha, \beta) = tr(\alpha^tS\beta J_2) = tr(-J_2\beta^tS\alpha) = tr(-J_2y\alpha J_2\beta^t) =\]\[=  tr(y\alpha J_2^{-1}\beta^tJ_2) = tr(y \alpha\beta').\]
The last description shows that $E$ only takes rational values on $B$. Then some multiple of $E$ only takes integral values on $\Lambda$ ($E$ is Shimura's form from \cite{Shim}).
\end{example}


\begin{thebibliography}{PePoSch87}
\bibitem[A57]{At} M.F. Atiyah: Complex analytic conections in fibre bundles. AMS Transactions {\bf 85} (1957), 181-207  
\bibitem[Be83]{Be} A. Beauville: Vari\'et\'e K\"ahleriennes dont la premiere classe de Chern est nulle. J.Diff.Geom. {\bf 18} (1983), 755--782
\bibitem[BiLa04]{BiLa} Birkenhake, Lange: Complex Abelian Varieties, Grundlehren der Math. Wiss. Springer {\bf 302} (2004)
\bibitem[Bo74]{Bo} F. Bogomolov: On the decomposition of K\"ahler manifolds with trivial canonical class. Math.USSR Sbornick {\bf 22} (1974) 580--583
\bibitem[BrHaNo32]{BHN} R.Brauer, H.Hasse, E.Noether: Beweis eines Hauptsatzes in der Theorie der Algebren, Crelle Journal {\bf 167} (1932), 399--404
\bibitem[Fa83]{Fa} G.Faltings: Arakelov's theorem for abelian varieties, Inv.Math. {\bf 73} (1983), 337--347
\bibitem[Ha77]{Ha} R.Hartshorne: Algebraic Geometry. Springer GTM {\bf 52} (1977)
\bibitem[HwMo97]{HM} J.M. Hwang, N.Mok: Uniruled projective manifolds with irreducible reductive $G$--structures. Crelle Journal {\bf 490} (1997) 55--64
\bibitem[JaRa04]{JRproj} P. Jahnke, I. Radloff: Threefolds with holomorphic normal projective connections. Math. Ann. {\bf 329} (2004), 379-400
\bibitem[JaRa05]{JRquadr} P. Jahnke, I. Radloff: Projective threefolds with
  holomorphic conformal structure. Int. J. Math. {\bf 16}, No.6 (2005),
  595-607
\bibitem[JoZu02]{JostZuo} J. Jost, K. Zuo: Arakelov type inequalities for Hodge
  bundles over algebraic varieties, Part 1: Hodge bundles over algebraic
  curves. J. Alg. Geom. {\bf 11} (2002), 535–546
\bibitem[Ka91]{Kaw} Y. Kawamata: On the length of an extremal rational
  curve. Inv. Math. {\bf 105} (1991), 609-611
\bibitem[Ko93]{Ko} J. Koll{\'a}r: Shafarevich maps and plurigenera of
  algebraic varieties, Inv. Math. {\bf 113}, 176--215, (1993)
\bibitem[KoOc80]{KO} S. Kobayashi, T. Ochiai: Holomorphic projective
  structures on compact complex surfaces I, II. Math. Ann. {\bf 249}
  (1980), 75-94; {\bf 255} (1981), 519-521
\bibitem[KO2]{KObook} S. Kobayashi, T. Ochiai: Holomorphic structures
modeled after compact Hermitian symmetric spaces. Manifolds and Lie
groups, Pap. in Honor of Y. Matsushima, Prog. Math. {\bf 14} (1981), 207--222
\bibitem[KoMo98]{KM} J. Koll{\'a}r, S. Mori: Birational Geometry of Algebraic
  Varieties. Cambridge tracts in mathematics {\bf 134} (1998)
\bibitem[KoWu83]{KoWu} S. Kobayashi, H. Wu: Complex Differential Geometry. DMV
  Seminar {\bf 3}, Birkh\"auser (1983)
\bibitem[MoMo96]{MM} R. Molzon, K. Mortensen: The Schwarzian
  derivative for maps between manifolds with complex projective
  connections. Tr. AMS {\bf 348}, (1996), 3015-3036
\bibitem[MiPe97]{MP} Y. Miyaoka, T. Peternell: Geometry of higher dimensional
  algebraic varieties, DMV Seminar {\bf 26}, Birkh\"auser (1997)
\bibitem[Mori79]{Mori} S. Mori: Projective manifolds with ample tangent
  bundles. Ann. Math. {\bf 110} (1979), 593-696
\bibitem[Mori87]{MoriClass} S. Mori: Classification of higher dimensional varieties, Procedings of Symposia in Pure Math., {\bf 46}, 269--331 (1987)
\bibitem[Mu66]{Mum} D. Mumford: A note of Shimura's paper: Discontinuous groups and Abelian varietes. Math. Ann. {\bf 181} (1969), 345-351 
\bibitem[Na99a]{Nakayama} N. Nakayama: Projective algebraic varieties whose universal covering spaces are biholomorphic to $C^n$. J. Math. Soc. Japan {\bf 51} (1999), 643-654
\bibitem[Na99b]{Nalang} N. Nakayama: Compact K\"ahler manifolds whose universal
  covering spaces are biholomorphic to $C^n$. RIMS Preprint {\bf 1230} (1999)
\bibitem[PPSch87]{PePoSc} Th. Peternell, J. Le Potier, M. Schneider: Direct
  Images of Sheaves of Differentials and the Atiyah Class. Math. Z. {\bf 196},
  (1987), 75--85
\bibitem[Sa80]{Satakebuch} I. Satake: Algebraic structures of symmetric domains.
 Kano Memorial Lectures {\bf 4}. Publ. Math. Soc. of Japan, {\bf 14}. Tokyo:
 Iwanami Shoten (1980)
\bibitem[Sa97]{Satake} I. Satake: Equivariant holomorphic embeddings of symmetric domains and applications. RIMS Kokyuroku {\bf 1002} (1997), 19-31
\bibitem[Sh59]{Shim} G. Shimura: On the theory of automorphic
  functions. Ann. of Math. {\bf 70} (1959), 101-144
\bibitem[Ta75]{Ta} K. Takeuchi, K: A characterization of arithmetic Fuchsian groups. J.Math.Soc.Japan {\bf 27} (1975) 591--596 
\bibitem[vGe08]{vG} B. van Geemen: Real multiplication on K3 surfaces and Kuga Satake varieties, Michigan Math. J. {\bf 56} (2008), 375--399 
\bibitem[ViZu04]{VZ} E. Viehweg, K. Zuo: A characterization of certain
  Shimura curves in the moduli stack of abelian varieties.  J. Diff. Geom {\bf 66} (2004) 233--287
\bibitem[ViZu07]{VZ2} E. Viehweg, K. Zuo: Arakelov inequalities and the uniformization of certain rigid Shimura varieties. J. Diff. Geom {\bf 77} (2007) 291--352
\end{thebibliography}
\end{document}